\def\UMR{\mathop{\rm UMR }\nolimits}
\title{Network reliability in hamiltonian graphs}
\author{Pol Llagostera, Nacho L\'opez, Carles Comas.\\
{\small Dep. de Matem\`atica, Universitat de Lleida}\\
 {\small Lleida, Spain}\\
 {\small {\tt pol.llagostera@udl.cat}}\\
 {\small {\tt \{nlopez,carles.comas\}@matematica.udl.cat}}
}
\newenvironment{example}{{\bf Example:}}{}
\newtheorem{proposition}{Proposition}[section]
\newtheorem{corollary}{Corollary}[section]
\newtheorem{lemma}{Lemma}[section]
\newtheorem{theorem}{Theorem}[section]
\newtheorem{conjecture}{Conjecture}[section]
\newtheorem{observation}{Observation}[section]
\newtheorem{problem}{Problem}[section]
\begin{document}

\maketitle

\begin{abstract}
The reliability polynomial of a graph gives the probability that a graph remains operational when all its edges could fail independently with a certain fixed probability. In general, the problem of finding uniformly most reliable graphs inside a family of graphs, that is, one graph whose reliability is at least as large as any other graph inside the family, is very difficult. In this paper, we study this problem in the family of graphs containing a hamiltonian cycle.

\end{abstract}

\noindent\emph{Keywords:} Network reliability, network design, reliability polynomial, hamiltonian graph.
\section{Introduction}

\subsection*{Notation and terminology}

In the reliability context, networks are modeled by graphs. We recall that a graph is an ordered pair $G=(V,E)$, where $V$ is a non empty set of \emph{nodes} or \emph{vertices}, and $E$ is a set of unordered pairs of different elements of $V$, called \emph{links} or \emph{edges}. The degree of a vertex $v \in V$, denoted by $d(v)$, is the number of incident edges at $v$. A {\em walk\/} of length $\ell\geq 0$ from a vertex $u$ to a vertex $v$ is a sequence of $\ell+1$ vertices, $u_0u_1\dots u_{\ell-1}u_\ell$, such that $u=u_0$, $v=u_\ell$ and each pair $u_{i-1}u_i$, for $i=1,\ldots,\ell$, is an edge of $G$. A {\em connected} graph has always a walk between any pair of vertices. Otherwise, the graph is {\em not connected}. Our model is a stochastic network with perfect nodes and edge failures: each edge remains connected independently with probability $p$ (every edge has the same probability of being operational). Moreover, no repair is allowed after an edge fails. \\

Through this paper, we consider the problem of computing the probability that a network remains connected. More precisely, we focus on what is called, the {\em all-terminal network reliability problem}: given the probability $p$ of an edge being operational in a network $G$, what is the probability that there exists an operational path between every pair of nodes $u$ and $v$ of $G$.

\subsubsection*{Hamiltonian graphs}

A graph $G$ is a {\em hamiltonian graph} if it contains a spanning cycle, that is, a cycle passing through all the vertices of the graph. This cycle is called a {\em hamiltonian cycle}. The problem of finding a hamiltonian cycle takes back to the 1850s when Sir William Rowan Hamilton presented the problem of finding a hamiltonian cycle in a dodecahedron. The complexity of finding a hamiltonian cycle in a graph is in general NP-complete and much research in this area is devoted to finding necessary and/or sufficient conditions for a graph to be hamiltonian. One of such conditions is Ore's Theorem, which is a well-known result in the field.

\begin{theorem}[Ore]\label{the:Ore}
	Let $G$ be a connected graph of order $n$ such that $d(u)+d(v) \geq n$ for any two pair of non adjacent vertices $u$ and $v$. Then $G$ is a hamiltonian graph. 
\end{theorem}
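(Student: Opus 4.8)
The plan is to argue by contradiction, deducing a hamiltonian cycle from a \emph{longest path}. Assume $G$ satisfies Ore's hypothesis but has no hamiltonian cycle, and fix a path $P=v_1v_2\cdots v_k$ of maximum length in $G$. The maximality of $P$ immediately forces every neighbor of the endpoint $v_1$ to lie on $P$, and likewise for $v_k$: otherwise $P$ could be prolonged at one end. In particular $d(v_1)$ counts only neighbors among $v_2,\dots,v_k$, and $d(v_k)$ only neighbors among $v_1,\dots,v_{k-1}$.

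The core step is to show that the $k$ vertices of $P$ lie on a common cycle. If $v_1v_k\in E$ this is clear. Otherwise $v_1$ and $v_k$ are non-adjacent, so $d(v_1)+d(v_k)\ge n\ge k$. Put $S=\{i : 1\le i\le k-1,\ v_1v_{i+1}\in E\}$ and $T=\{i : 1\le i\le k-1,\ v_iv_k\in E\}$; by the previous remark $|S|=d(v_1)$ and $|T|=d(v_k)$, while $S,T\subseteq\{1,\dots,k-1\}$, a set of size $k-1<k\le|S|+|T|$. Hence $S\cap T\ne\varnothing$, so there is an index $i$ with $v_1v_{i+1}\in E$ and $v_iv_k\in E$, and then $v_1v_2\cdots v_i\,v_kv_{k-1}\cdots v_{i+1}\,v_1$ is a cycle $C$ through exactly the vertices of $P$.

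It remains to show $k=n$. If not, then since $G$ is connected there is an edge joining some vertex $x\notin V(C)$ to a vertex of $C$; deleting a suitable edge of $C$ turns it into a spanning path of $V(C)$ with that vertex as an endpoint, and appending $x$ yields a path on $k+1$ vertices, contradicting the maximality of $P$. Therefore $C$ is a hamiltonian cycle of $G$.

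The main obstacle is the middle step: the pigeonhole argument that ``rotates'' a longest path into a cycle on the same vertex set. Everything hinges on keeping $S$ and $T$ inside $\{1,\dots,k-1\}$ --- which is exactly where maximality of $P$ is used --- together with the degree-sum hypothesis, which forces $|S|+|T|\ge k$ and hence $S\cap T\ne\varnothing$. The passage from a non-spanning cycle to a longer path is routine. An alternative route is the Bondy--Chv\'atal closure argument, but the longest-path proof is shorter and needs no additional machinery.
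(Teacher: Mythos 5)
Your proof is correct, and it is the classical longest-path ``crossover'' argument for Ore's theorem; the paper itself states this result as known background and gives no proof, so there is nothing to compare against on that side. The key steps are all sound: maximality of $P$ confines the neighbours of both endpoints to $V(P)$, which is exactly what lets you identify $|S|=d(v_1)$ and $|T|=d(v_k)$ inside the index set $\{1,\dots,k-1\}$; the degree-sum hypothesis then forces $S\cap T\neq\varnothing$ by pigeonhole, producing a cycle on $V(P)$; and connectivity upgrades a non-spanning cycle to a longer path, contradicting maximality. One pedantic remark: the statement as given in the paper (and hence your proof) implicitly needs $n\geq 3$ --- for $n=2$ the hypothesis is vacuous on $K_2$, which is connected but not hamiltonian, and for $k=2$ the ``cycle'' through $V(P)$ would be degenerate. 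Adding the standing assumption $n\geq 3$ (under which the Ore condition already forces the longest path to have at least three vertices) closes this off; otherwise the argument is complete as written.
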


Recent developments and many results regarding hamiltonian graphs can be found in \cite{Gould2003}.

\subsection*{The reliability polynomial}
 
Let $|G|$ denote the number of edges of a network $G=(V,E)$. Let us consider the set ${\cal G}$ of connected spanning subgraphs of $G$. Then, the probability that $G$ is connected, as a function of $p$, is

\begin{equation}  
\label{eq:relpoly1}
\sum_{\substack{G' \in {\cal G}}} p^{|G'|} (1-p)^{m-|G'|}
\end{equation}

This formula is known as {\em the reliability polynomial of $G$}, and it is denoted as $\mbox{Rel}(G,p)$. There are several methods for computing $\mbox{Rel}(G,p)$, but in general this problem is NP-complete (see \cite{Perez2018}). A \emph{pathset} of a graph $G=(V,E)$ is a subset $N \subseteq E$ of edges that makes the graph $(V,N)$ connected. Hence, an alternative definition for the reliability polynomial is,
\begin{equation} 
\label{eq:relpoly2}
\mbox{Rel}(G,p) = \sum_{i=0}^{m} N_i p^i (1-p)^{m-i}
\end{equation}
where $N_i$ denotes the number of pathsets of cardinality $i$. From this point of view, some of the coefficients of the reliability polynomial are `easy' to compute. For instance, any spanning tree of $G$ contain $m-n+1$ edges, so $N_{m-n+1}=\tau$ 
where $\tau$ is the number of spanning trees of $G$ (also known as the tree-number).  Besides, $N_i= 0$ 
for all $i<m-n+1$. 
Also $N_i=\binom{m}{m-i}$ 
for all $i > m- \lambda$ 
, where $\lambda$ denotes the edge-connectivity of $G$. 

\subsection*{Uniformly most reliable graphs}

A main problem in the reliability context is concerned with the design of networks with `high' reliability. To this end, let ${\cal G}(n,m)$ be the set of all simple connected graphs with $n$ vertices and $m$ edges. Given two graphs $G,G' \in {\cal G}(n,m)$, we say that $G$ is {\em uniformly more reliable} than $G'$ if $\mbox{Rel}(G,p) \geq \mbox{Rel}(G',p)$ for all $p \in [0,1]$. This means that, for any value of an edge to being operational $p$, graph $G$ has higher probability to remain connected than graph $G'$. If there exists a graph $G$ such that $\mbox{Rel}(G,p) \geq \mbox{Rel}(H,p)$ for all $p \in [0,1]$ and for all $H \in {\cal G}(n,m)$, then $G$ is known as a {\em uniformly most reliable} graph in the set ${\cal G}(n,m)$. We point out that uniformly most reliable graphs have been also known as {\em uniformly optimally reliable} graphs (see \cite{Boesch88,Boesch91,Gross98,Smith90}) and {\em uniformly optimal} digraphs for the directed case (see \cite{Brown07}).

However, the reliability polynomial does not define a total ordering in ${\cal G}(n,m)$, because there are graphs whose corresponding reliability polynomial have a crossing point in the interval $(0,1)$ (see \cite{Colbourn87}). In order to prove that a graph is uniformly most reliable graph, the following observation is wide used. 
\begin{observation}\label{obs:rel}
Let $G,G'$ be graphs such that $N_i(G) \leq N_i(G')$, 
for all $0 \leq i \leq m$. Then $\mbox{Rel}(G,p) \leq \mbox{Rel}(G',p)$ for all $p \in [0,1]$.
\end{observation}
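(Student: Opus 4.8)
The plan is to work directly from the pathset form of the reliability polynomial recorded in \eqref{eq:relpoly2}. Since the hypothesis compares $N_i(G)$ with $N_i(G')$ for every $0 \le i \le m$, I read it as presupposing that $G$ and $G'$ have the same number of edges $m = |G| = |G'|$, so that both polynomials are written over a common index range: $\mbox{Rel}(G,p) = \sum_{i=0}^{m} N_i(G)\, p^i (1-p)^{m-i}$ and likewise for $G'$. Everything then reduces to a termwise comparison in this fixed monomial basis.

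First I would form the difference $\mbox{Rel}(G',p) - \mbox{Rel}(G,p) = \sum_{i=0}^{m} \bigl(N_i(G') - N_i(G)\bigr)\, p^i (1-p)^{m-i}$. Second, I would observe that for every $p \in [0,1]$ and every $i$ with $0 \le i \le m$ the quantity $p^i (1-p)^{m-i}$ is nonnegative, being a product of nonnegative reals. Third, the hypothesis gives $N_i(G') - N_i(G) \ge 0$ for each $i$, so each summand is a product of two nonnegative numbers and hence nonnegative; a sum of nonnegative terms is nonnegative, whence $\mbox{Rel}(G',p) - \mbox{Rel}(G,p) \ge 0$, that is, $\mbox{Rel}(G,p) \le \mbox{Rel}(G',p)$ for all $p \in [0,1]$, as claimed.

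There is no genuine obstacle here; the statement is an immediate coefficientwise estimate, and the only point worth a word of care is that the comparison must be performed in the \emph{same} basis $\{p^i(1-p)^{m-i}\}_{i=0}^{m}$, which is exactly what the shared edge count $m$ guarantees. (Had one started instead from the subgraph-counting form \eqref{eq:relpoly1}, the identical argument applies verbatim with $|G'|$ playing the role of the exponent $i$.) I would also note in passing that the converse of Observation~\ref{obs:rel} is false in general, which is precisely why this sufficient condition on the coefficient sequence $(N_i)$ — rather than the polynomial itself — is the tool used throughout the rest of the paper.
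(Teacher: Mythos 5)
Your proof is correct and is exactly the termwise-comparison argument the paper implicitly relies on when stating this as an unproved Observation: the difference $\mbox{Rel}(G',p)-\mbox{Rel}(G,p)$ is a sum of the nonnegative quantities $\bigl(N_i(G')-N_i(G)\bigr)p^i(1-p)^{m-i}$ over $0\leq i\leq m$. Your remark that the comparison requires both graphs to have the same number of edges $m$ (so that the basis $\{p^i(1-p)^{m-i}\}$ is shared) is a worthwhile precision that the paper leaves tacit, since it only ever applies the observation within a fixed class ${\cal G}(n,m)$.
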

Hence, maximizing the number of pathsets $N_i$ 
has been a classical method to obtain uniformly most reliable graphs.\\

There exist uniformly most reliable graphs for $m \leq n+3$, and other simple values (see Table \ref{tab:mostReliable}), but in the other hand, there are infinitely many values of $n$ and $m$ where uniformly most reliable graphs do not exist (see \cite{Myr91}).  The uniformly most reliable graphs for $n \leq m \leq n+3$ are {\em subdivisions} of named graphs. The {\em edge subdivision} operation for an edge $uv$ is the deletion of $uv$ from the graph and the addition of two edges $uw$ and $wv$ along with the new vertex $w$. A graph which has been derived from $G$ by a sequence of edge subdivision operations is called a {\em subdivision} of $G$. For instance, every uniformly most reliable graph in the case $m=n+1$ ($n\geq 5$) is constructed taking a graph of order 5 as a basis and subdividing edges and adding vertices one by one by following the sequence $A,B,C,A,B,C,\dots$ (see Fig. \ref{fig:mrg}). This graph is also called the {\em Monma graph} \cite{WangWu74}. Uniformly most reliable graphs can be also obtained as subdivisions of named graphs for $m=n+2$ and $m=n+3$. We will denote these external graphs as $\UMR(n,m)$. It has been conjectured that $\UMR(n,n+4)$ are particular subdivisions of the Wagner graph for $n\geq8$ (see \cite{Rom17}). Despite these general constructions, uniformly most reliable graphs have been found only for $(n,m)=(8,12)$ (Wagner graph) and $(10,15)$ (Petersen graph).

\begin{table}[htb]
\begin{center}
\begin{tabular}{|c|l|c|}
\hline
$(n,m)$ & Uniformly most reliable graphs & Reference\\ \hline \hline
$(n,n-1)$ & $\UMR(n,n-1)$ (Tree graphs $T_n$) & \\ \hline
$(n,n)$ & $\UMR(n,n)$ (Cycle graphs $C_n$) & \\ \hline
$(n,n+1)$ & $\UMR(n,n+1)$ (Monma graph) & \cite{WangWu74} \\ \hline
$(n,n+2)$ & $\UMR(n,n+2)$ (subdivision of $K_4$) & \cite{Satya92}\\ \hline
$(n,n+3)$ & $\UMR(n,n+3)$ (subdivision of $K_{3,3}$) & \cite{Boesch91}\\ \hline
$(8,12)$ & Wagner graph & \cite{Rom17} \\ \hline
$(10,15)$ & Petersen graph & \cite{Rom18} \\ \hline
\hline
\end{tabular}
\end{center}
\caption{Uniformly most reliable graphs known in ${\cal G}(n,m)$, where $n$ denotes the number of vertices and $m$ the number of edges.}\label{tab:mostReliable}
\end{table}

\begin{figure}[htb]
\begin{center}
\begin{tabular}{cc}
\includegraphics[scale=0.5]{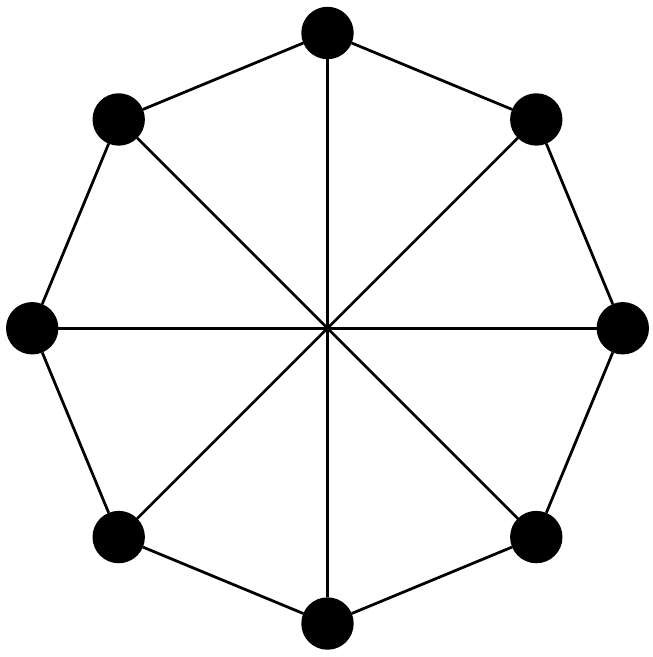} & \includegraphics[scale=0.5]{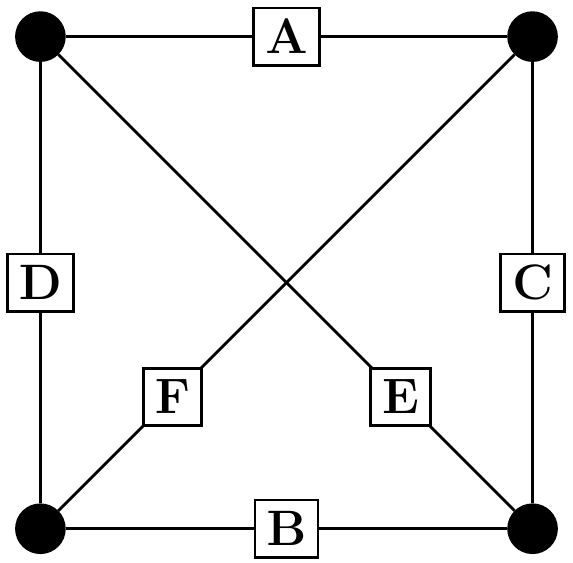} \\
 (a) Wagner graph. & (b) $\UMR(n,n+2)$.\\
 & \\
\includegraphics[scale=0.5]{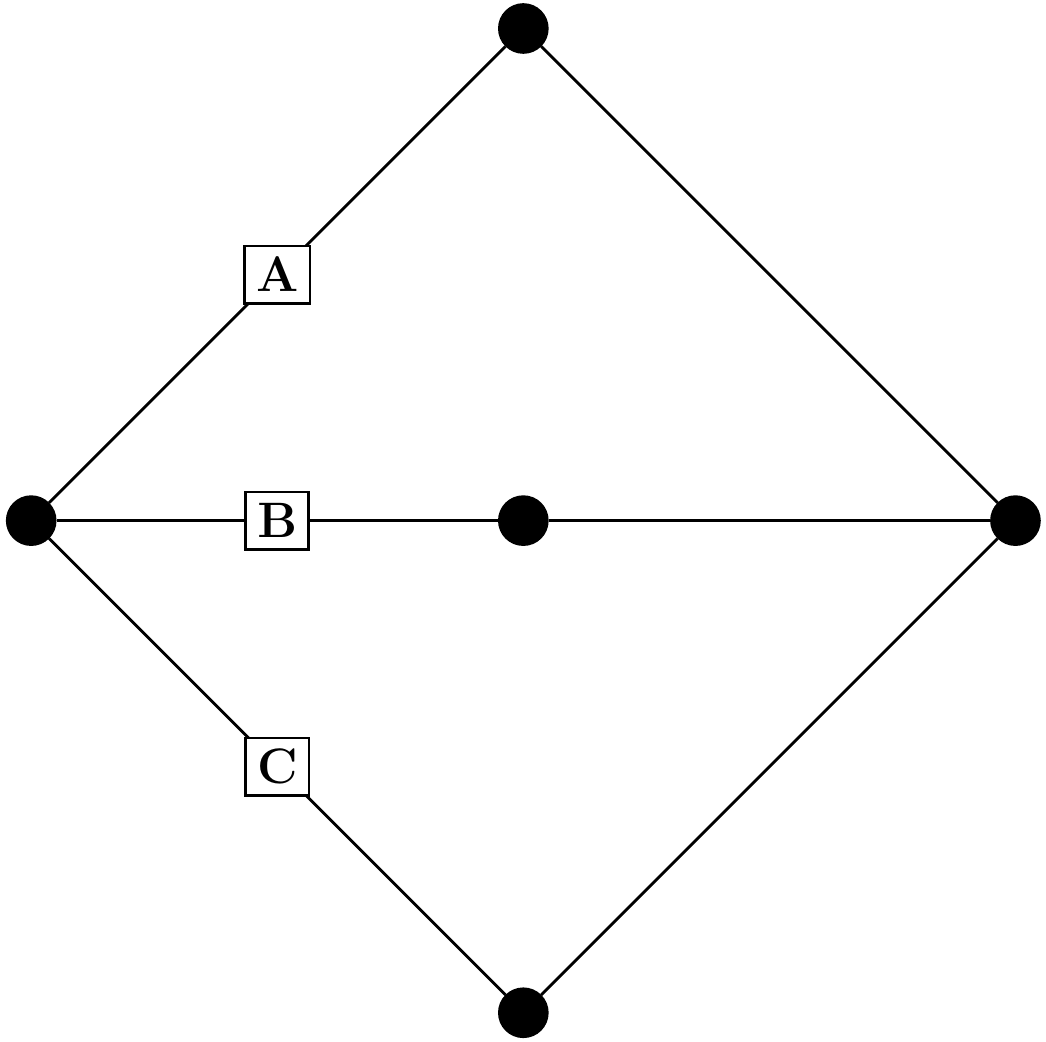} & \includegraphics[scale=0.5]{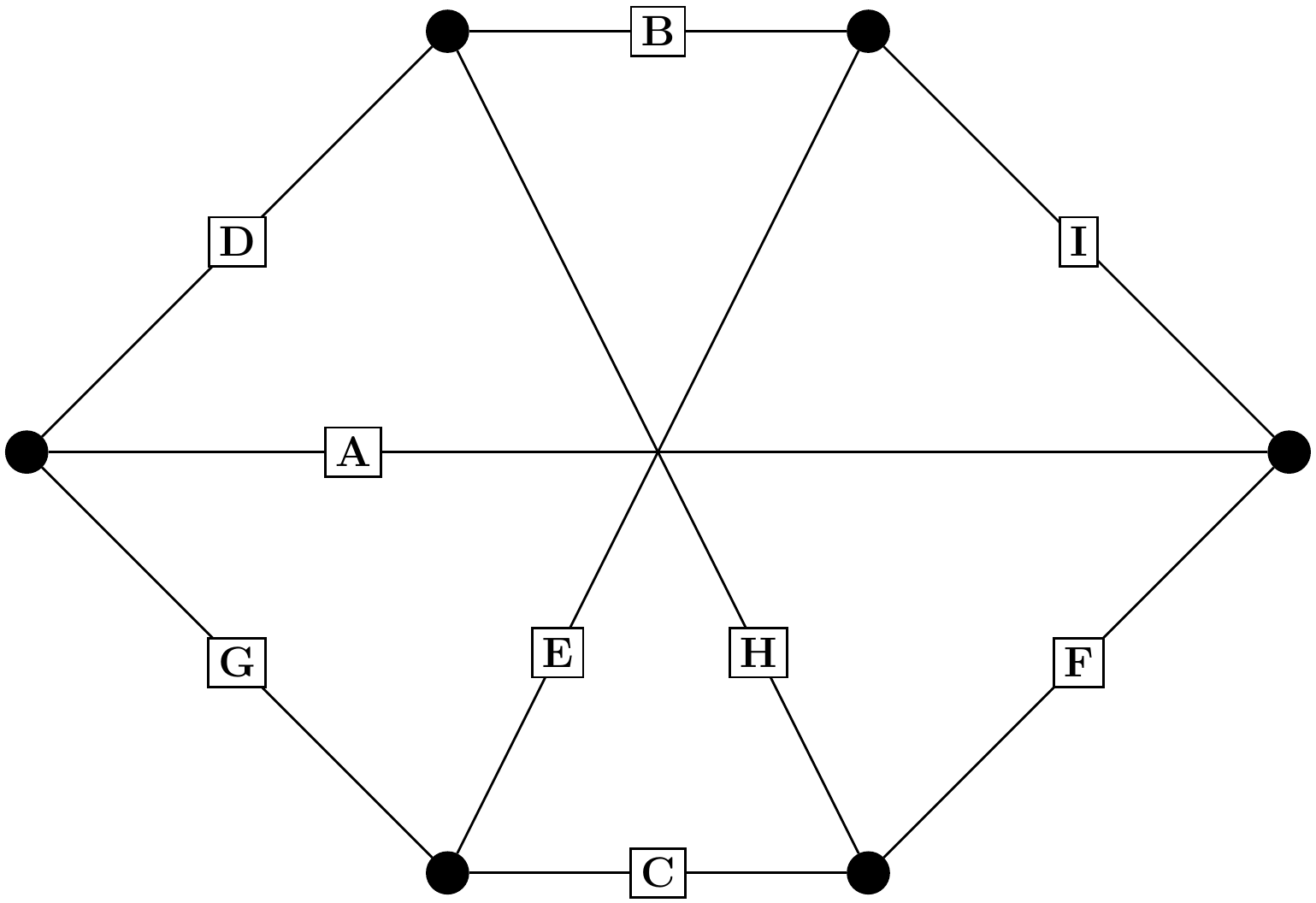} \\
(c) $\UMR(n,n+1)$. & (d) $\UMR(n,n+3)$.\\
\end{tabular}
\end{center}
\caption{Uniformly most reliable graphs.}\label{fig:mrg}
\end{figure}


\section{Uniformly most reliable hamiltonian graphs}\label{sec:most_reliable_ham}
Uniformly most reliable graphs have been classically studied on the set ${\cal G}(n,m)$ of (simple) graphs on $n$ vertices and $m$ edges. A restricted version of the problem would be the study of these external graphs inside an important family of graphs $\cal{F}$, such as the hamiltonian graphs. 
Let us denote as ${\cal H}(n,m)$ the set of (non-isomorphic) hamiltonian graphs with $n$ vertices and $m$ edges. Of course, ${\cal H}(n,m)$ is a subset of ${\cal G}(n,m)$ and since every hamiltonian graph on $n$ vertices and $m$ edges contains a spanning cycle, then we may assume that $m \geq n$. If a particular graph $G$ is uniformly most reliable in ${\cal G}(n,m)$ and it is in addition hamiltonian, then $G$ is also uniformly most reliable in ${\cal H}(n,m)$. This happens trivially in case $(n,n)$ for all $n\geq 3$, where $C_n$ are uniformly most reliable graphs, and also in $(8,12)$, where the Wagner graph is a uniformly most reliable graph and it is also hamiltonian (see Figure \ref{fig:mrg} (a)). It is well known that the Petersen graph is not hamiltonian (see \cite{Gould2003}) and in general, where $n$ is large enough, uniformly most reliable graphs are not hamiltonian, as next result shows. 

\begin{proposition}
Let us consider the uniformly most reliable graphs $\UMR(n,m)$ in ${\cal G}(n,m)$ for $m \leq n+3$. Then,
\begin{itemize}
 \item[(a)] $\UMR(n,n+1)$ is not hamiltonian;
 \item[(b)] $\UMR(n,n+2)$ is hamiltonian if and only if $n \leq 8$;
 \item[(c)] $\UMR(n,n+3)$ is hamiltonian if and only if $n \leq 13$;
\end{itemize}
\end{proposition}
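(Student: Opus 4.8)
The plan is to analyze the structure of each graph $\UMR(n,m)$ and either exhibit a hamiltonian cycle explicitly or certify non-hamiltonicity by a combinatorial obstruction. For each of the three families the graphs are subdivisions of a fixed small ``core'' graph ($C_5$-based Monma graph for $m=n+1$, $K_4$ for $m=n+2$, $K_{3,3}$ for $m=n+3$), obtained by distributing the subdivision vertices on the edges of the core following a prescribed cyclic rule. So throughout I would think of $\UMR(n,m)$ as the core graph with each edge replaced by a path of some length, and keep track of how many extra (degree-two) vertices sit on each edge.

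For part (a), the Monma graph has a cut vertex structure: it is essentially a triangle (or $K_4$ minus an edge, i.e.\ two triangles sharing an edge in the base case) with pendant paths, or more precisely the base graph on $5$ vertices whose subdivisions give $\UMR(n,n+1)$ contains a vertex of degree~$2$ whose removal, together with the structure, blocks any spanning cycle. The cleanest argument: a hamiltonian graph is $2$-connected, so it suffices to show $\UMR(n,n+1)$ has a cut vertex (or more simply that it is not $2$-edge-connected, since $\lambda=1$ would already do it — but here we likely have $\lambda=2$ with a cut vertex). I would locate the vertex of the core of degree $\geq 3$ that all cycles must avoid being hamiltonian through, and show that the ``theta-like'' part of the Monma graph forces any cycle to miss at least one vertex of the remaining path. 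Concretely: the Monma graph is two cycles sharing a single vertex (a ``figure-eight'' after subdivision) together with a pendant path, and a graph with a cut vertex cannot be hamiltonian for $n \geq 5$.

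For parts (b) and (c), the strategy is a dichotomy. First, produce a hamiltonian cycle for small $n$: since $K_4$ and $K_{3,3}$ are themselves hamiltonian, subdividing edges keeps a hamiltonian cycle \emph{provided} the subdivision vertices can be absorbed into a single spanning closed walk — which works exactly when the subdivision vertices are ``not too unevenly'' distributed, i.e.\ when every edge not on the chosen hamiltonian cycle of the core carries at most as many subdivision vertices as can be rerouted, but in fact for $K_4$ one can always route a hamiltonian cycle through $3$ of the $6$ edges; the obstruction appears only when one of the \emph{three edges outside any hamiltonian cycle} is forced to carry a subdivision vertex that can never be visited. In $K_4$ every edge lies on some hamiltonian cycle, so the real constraint is: can we choose a hamiltonian cycle of $K_4$ that contains \emph{all} subdivided edges? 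A hamiltonian cycle of $K_4$ uses $4$ of the $6$ edges, missing a perfect matching ($2$ edges). So $\UMR(n,n+2)$ is hamiltonian iff the subdivision vertices can be confined to the complement of some perfect matching of $K_4$, i.e.\ iff at most one of the three perfect matchings of $K_4$ ``touches'' only unsubdivided edges — equivalently the subdivided edges avoid some perfect matching. Counting how many subdivision vertices $\UMR(n,n+2)$ puts on each of the $6$ edges (this follows the cyclic $A,B,C,\dots$ distribution) gives the threshold $n \leq 8$. The same analysis for $K_{3,3}$: a hamiltonian cycle of $K_{3,3}$ uses $6$ of the $9$ edges, missing a perfect matching ($3$ edges), and $K_{3,3}$ has $6$ perfect matchings; $\UMR(n,n+3)$ is hamiltonian iff its subdivided edges avoid one of these six perfect matchings, and tracking the distribution yields $n \leq 13$. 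I would write out the per-edge subdivision counts from the construction of $\UMR(n,m)$ (this is the one piece of genuine bookkeeping), then check for which $n$ a valid matching exists.

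The main obstacle I anticipate is the ``only if'' direction in (b) and (c): showing that once $n$ exceeds the threshold, \emph{every} potential hamiltonian cycle fails. The ``if'' direction is just exhibiting a cycle. For the converse I would argue that any hamiltonian cycle of a subdivision of $H$ (with $H \in \{K_4, K_{3,3}\}$) induces a hamiltonian cycle of $H$ itself (contract the subdivided paths), hence passes through \emph{all} of each subdivided path; so the set of subdivided edges must lie inside the edge set of some hamiltonian cycle of $H$, i.e.\ must avoid some perfect matching of $H$. Then it is a finite check, for the specific subdivision pattern, that no perfect matching of $H$ is disjoint from the set of subdivided edges once $n$ is large — this is where the precise construction of $\UMR(n,m)$ (which edges get the ``$A$'', ``$B$'', ``$C$'' subdivisions, and in which round) must be invoked carefully, and it is the step most likely to need a short case analysis rather than a one-line argument.
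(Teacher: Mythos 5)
Your criterion for parts (b) and (c) is sound and is essentially a global reformulation of what the paper does: a hamiltonian cycle in a subdivision of $H$ must traverse every subdivided edge entirely (its internal vertices have degree $2$) and contracts to a hamiltonian cycle of $H$, so for $H\in\{K_4,K_{3,3}\}$ the subdivision is hamiltonian if and only if the unsubdivided edges of $H$ contain a perfect matching. The paper instead uses the local obstruction that a vertex $v$ of degree $3$ whose three neighbours all have degree $2$ cannot lie on a hamiltonian cycle (each degree-$2$ neighbour forces both of its edges, hence all three edges at $v$, into the cycle); for the particular cyclic subdivision patterns defining $\UMR(n,n+2)$ and $\UMR(n,n+3)$ the two tests give the same answer. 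In both approaches the actual thresholds $n\le 8$ and $n\le 13$ come from the per-edge bookkeeping of how the subdivision vertices are distributed, which you correctly flag as the substantive step but do not carry out; as a plan this is acceptable.

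Part (a), however, rests on a false structural premise. $\UMR(n,n+1)$ (the Monma graph) is not ``two cycles sharing a single vertex together with a pendant path''; it is the balanced theta graph, i.e.\ a subdivision of $K_{2,3}$: two hub vertices of degree $3$ joined by three internally disjoint paths whose lengths are as equal as possible (the order-$5$ base graph is $K_{2,3}$ itself). A graph with a pendant path is not even $2$-edge-connected and could not be uniformly most reliable for $m=n+1$; and the theta graph is $2$-connected, so the cut-vertex argument you propose to rely on simply does not apply. The correct obstruction is the same local one used for (b) and (c): for $n\ge 5$ each hub of the theta graph has all three of its neighbours of degree $2$, forcing three cycle edges at that hub, which is impossible. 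Equivalently, any cycle in a theta graph uses at most two of the three hub-to-hub paths and therefore misses the internal vertices of the third. You should replace the cut-vertex argument by this one.
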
\label{prop:haminonham}
\begin{proof}
For any graph $G$ containing a vertex $v$ of degree $d(v) \geq 3$ and such that there are at least three different neighbors  $w_1,w_2,w_3$ of $v$ such that $d(w_1)=d(w_2)=d(w_3)=2$, then $G$ is not hamiltonian. Indeed, any vertex of $G$ uses exactly two edges in any hamiltonian cycle, hence the only two incident edges of $w_1,w_2$ and $w_3$ belong to any hamiltonian cycle. But one of these two pairs of edges is incident also to $v$, and hence $v$ would use three edges in any hamiltonian cycle, which is impossible. The graph $\UMR(n,n+1)$ given in Figure \ref{fig:mrg} has a vertex $v$ of degree $3$ and that all its neighbors have degree $2$. Hence it is not hamiltonian. It is easy to find a hamiltonian cycle in $\UMR(n,n+2)$ for $4 \leq n \leq 8$, when the four subdivisions $A,B,C,D$ are performed at most. But for $n \geq 9$ there exist a vertex of degree $3$, such that all its neighbors have degree $2$, and hence it is no longer hamiltonian. The argument for case (c) is similar: the sequence of edges (subdivided or not) $A,D,B,E,C,F$ gives a hamiltonian cycle for every $n \leq 12$ and the next subdivision (edge G) produces a vertex of degree $3$ such that all its neighbors have degree $2$. 
\end{proof}

Uniformly most reliable graphs are not hamiltonian when its order is large enough. So the natural question about which hamiltonian graphs are uniformly most reliable (whenever they do exist) arises. We partially answer this question in the following sections. 

\subsection{The case $m=n+1$}

Let us consider $G$ as a hamiltonian graph with $n$ vertices $\{0,1,\dots,n-1\}$ and $m=n+1$ edges where the hamiltonian cycle is given by the sequence $0,1,\dots,n-1,0$. Without loss of generality, we assume that the remaining edge of the graph joins vertex $0$ and vertex $x_1$. We may consider that $2 \leq x_1 \leq \lfloor \frac{n}{2} \rfloor$ (the remaining cases $\lfloor \frac{n}{2} \rfloor < x_1 < n-2$ produce an isomorphic graph taking $x_1'=n-x_1$). Then, the reliability polynomial of $G$ is given by
\begin{equation}\label{eq:eq1}
\mbox{Rel}(G,p)=p^m+mp^{m-1}(1-p)+\tau p^{m-2}(1-p)^2,
\end{equation}
where $\tau=n+x_1(n-x_1)$. Indeed, the removal of one edge or less guarantee the connectivity of the graph (the edge-connectivity of $G$ is precisely $\lambda=2$ for $n\geq 4$). Besides, deleting any set of three edges or more disconnects the graph ($N_i=0$ for all $i<m-2$). Hence, just the coefficient $N_{m-2}$ is unknown. This is precisely the tree number $\tau$. The number of spanning trees of $G$ is $n+x_1(n-x_1)$ since we have two cases: if edge $(0,x_1)$ is removed, then we can remove any of the $n$ remaining edges. Otherwise we may delete two edges belonging to the hamiltonian cycle. In order to guarantee the connection of the resulting graph, we must choose one edge from the cycle $0,1,\dots,x_1,0$ and the other from the cycle $0,x_1,x_1+1,\dots,n,0$. There are $x_1$ edges in one cycle and $n-x_1$ in the other. This gives the total number $n+x_1(n-x_1)$.

\begin{proposition}\label{prop:n+1}
The set of hamiltonian graphs ${\cal H}(n,n+1)$, $n \geq 4$, is totally ordered by the reliability polynomial and the uniformly most reliable graph is given by joining any of two vertices of $C_n$ at maximum distance.
\end{proposition}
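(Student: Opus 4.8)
The plan is to reduce the whole statement to the one-parameter description of ${\cal H}(n,n+1)$ already set up above. First I would observe that a graph $G$ with $n$ vertices and $n+1$ edges is hamiltonian if and only if it is obtained from the cycle $C_n$ by adding a single chord: a hamiltonian cycle uses $n$ of the $n+1$ edges, so the remaining edge is necessarily a chord, while conversely $C_n$ plus any chord trivially contains the spanning cycle $C_n$. Hence, up to isomorphism, $G$ is determined by the distance $x_1$ along $C_n$ between the two endpoints of the chord; since the chord cannot be parallel to a cycle edge and the reflection $i \mapsto n-i$ is an automorphism of $C_n$, we may assume $2 \le x_1 \le \lfloor n/2 \rfloor$, and this range is non-empty precisely because $n \ge 4$. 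Write $G_{x_1}$ for the corresponding graph.

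Second, I would invoke the coefficient count carried out just before the proposition. For each $G_{x_1}$ one has $\lambda(G_{x_1}) = 2$, so $N_i = 0$ for all $i < m-2$, together with $N_m = 1$ and $N_{m-1} = m$, while the only remaining coefficient is $N_{m-2} = \tau(G_{x_1}) = n + x_1(n-x_1)$. Thus for any two members $G_{x_1}, G_{x_1'}$ of the family we have $N_i(G_{x_1}) = N_i(G_{x_1'})$ for every $i \ne m-2$, and $N_{m-2}(G_{x_1}) \le N_{m-2}(G_{x_1'})$ exactly when $x_1(n-x_1) \le x_1'(n-x_1')$. Observation~\ref{obs:rel} then yields $\mbox{Rel}(G_{x_1},p) \le \mbox{Rel}(G_{x_1'},p)$ on $[0,1]$ whenever $x_1(n-x_1) \le x_1'(n-x_1')$; since the quantity $x_1(n-x_1)$ is a real number, any two members of ${\cal H}(n,n+1)$ are comparable, so the reliability polynomial induces a total (linear) order on ${\cal H}(n,n+1)$.

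Finally I would identify the maximum. The function $f(x) = x(n-x)$ is a concave parabola with vertex at $x = n/2$, hence nondecreasing on the integer interval $2 \le x \le \lfloor n/2 \rfloor$ and attaining its maximum at $x_1 = \lfloor n/2 \rfloor$; in fact $f$ is strictly increasing there (the increment $f(x+1)-f(x) = n-1-2x$ is positive for $x \le \lfloor n/2\rfloor - 1$), so the maximizer is unique and distinct admissible $x_1$ give non-isomorphic graphs with strictly ordered reliability. The graph $G_{\lfloor n/2 \rfloor}$ is obtained by joining two vertices of $C_n$ at distance $\lfloor n/2 \rfloor$, i.e.\ at maximum distance in $C_n$, and by the previous paragraph it is uniformly more reliable than every other member of ${\cal H}(n,n+1)$, which is the claim.

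I do not expect a genuine obstacle: the essential work — that every coefficient of the reliability polynomial except $\tau$ is constant over the family — was done before the statement, and the rest is elementary. The only points needing slight care are the isomorphism reduction (excluding $x_1 \in \{1,n-1\}$ and using the reflection symmetry of $C_n$ to cap $x_1$ at $\lfloor n/2\rfloor$) and confirming $\lambda(G_{x_1}) = 2$ for all $n \ge 4$, so that indeed $N_i = 0$ for $i < m-2$; both are immediate. Recording the strict monotonicity of $f$ on $\{2,\dots,\lfloor n/2\rfloor\}$ is what upgrades the total order to a strict one and pins down the maximizer.
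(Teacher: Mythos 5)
Your proof is correct and follows essentially the same route as the paper: reduce to the one-parameter family $G_{x_1}$ with $2 \le x_1 \le \lfloor n/2 \rfloor$, note that all coefficients of $\mbox{Rel}(G,p)$ except $\tau = n + x_1(n-x_1)$ are constant over the family, and maximize the concave quadratic at $x_1 = \lfloor n/2 \rfloor$. Your write-up is in fact slightly more careful than the paper's on the total-ordering step (explicitly invoking Observation~\ref{obs:rel}) and on the strict monotonicity of $x_1(n-x_1)$ over the admissible range, but the substance is the same.
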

\begin{proof}
For any graph $G$ in ${\cal H}(n,n+1)$, there exist $x_1$, with $2 \leq x_1 \leq \lfloor \frac{n}{2} \rfloor$ such that $G$ is isomorphic to a graph with hamiltonian cycle $0,1,\dots,n-1,0$ plus an edge $(0,x_1)$. According to \eqref{eq:eq1}, for any $p$, the value of $\tau(G)=-x_1^2+nx_1+n$ increases with $x_1$ from $2$ to $\lfloor \frac{n}{2} \rfloor$. The maximum of $\tau$ can be computed also with $\frac{\partial \tau}{\partial x_1}=0$ which gives $x_1=\frac{n}{2}$. Since $x_1$ must be an integer, for $n$ odd, the maximum is at $x_1=\lfloor \frac{n}{2} \rfloor$. This graph $G$ is isomorphic to the graph constructed from $C_n$ by joining two vertices at maximum distance.
\end{proof}

Although the most reliable graph constructed from $C_n$ and adding one single edge was previously known (see \cite{Rom17}) it is good to notice that ${\cal H}(n,n+1)$ is totally ordered by the reliability polynomial for all $n \geq 4$, since in general, this is not true for $m \geq n+2$ where reliability polynomials can cross for $p \in (0,1)$.




\subsection{The case $m=n+2$}\label{sec:n+2}

Here we will present uniformly most reliable graphs in ${\cal H}(n,n+2)$. The reliability polynomial for any $G \in {\cal H}(n,n+2)$ is,
\begin{equation}\label{eq:eq2}
\mbox{Rel}(G,p)=p^{m}+mp^{m-1}(1-p)+N_{m-2}p^{m-2}(1-p)^2+\tau p^{m-3}(1-p)^3.
\end{equation}
Any hamiltonian graph $G$ of order $n$ and size $n+2$ can be graphically depicted in a circular embedding where every vertex of $G$ is in a hamiltonian cycle $C$ of $G$ together with $2$ more edges ({\em chords}) through the hamiltonian cycle $C$. Then, the hamiltonian cycle is split by the chords in four paths of lengths $x_1,x_2,x_3$ and $x_4$ (see Figure \ref{fig:mplus2} A), where $x_1+x_2+x_3+x_4=n$. In fact, we have two more situations regarding the relative position between the chords (see Figure \ref{fig:mplus2} \^{A} and B). Notice that graphs of type \^{A} can be seen as a particular case of graphs of type A when one path length is zero. In any case, for any graph $G \in {\cal H}(n,n+2)$ of any type A or B, we associate its corresponding vector $(x_1,x_2,x_3,x_4)$. We will refer this vector as the vector of {\em c-path lengths}. In the other way around, given any positive integer vector $(x_1,x_2,x_3,x_4)$, such that $x_1+x_2+x_3+x_4=n$, we can construct any graph $G$ of ${\cal H}(n,n+2)$ of any type. Moreover, since any cyclic permutation of $(x_1,x_2,x_3,x_4)$ produces an isomorphic graph, we can just consider those vectors modulo cyclic permutations.
\begin{figure}[htb]
	\begin{center}
		\begin{tabular}{ccc}
			\includegraphics[scale=0.6]{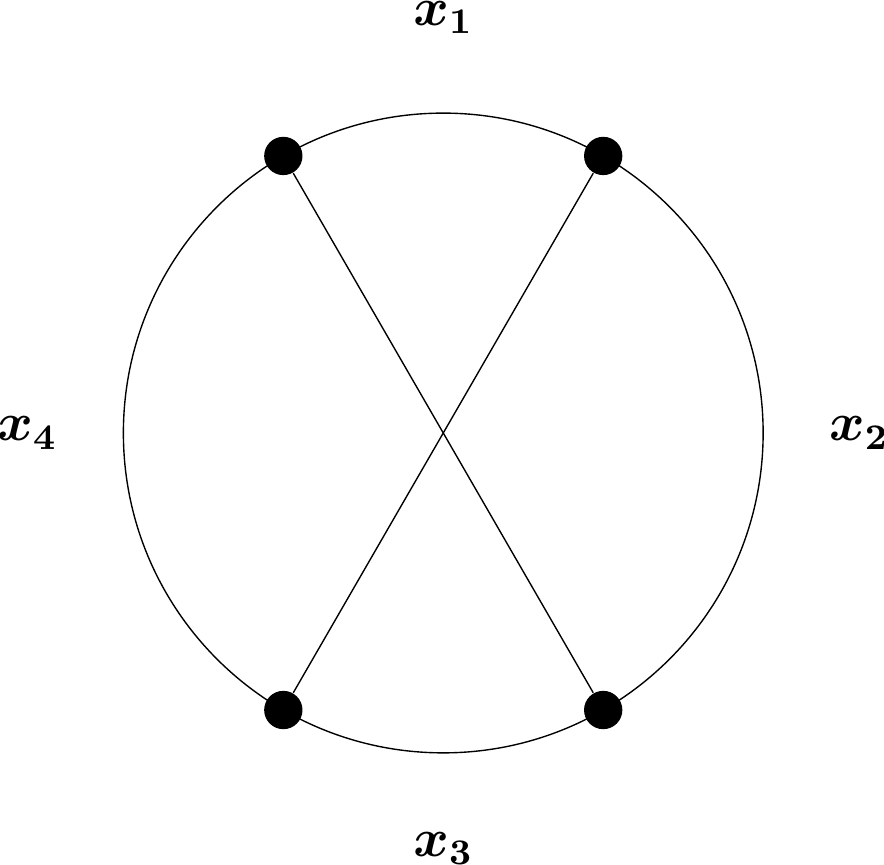} & \includegraphics[scale=0.6]{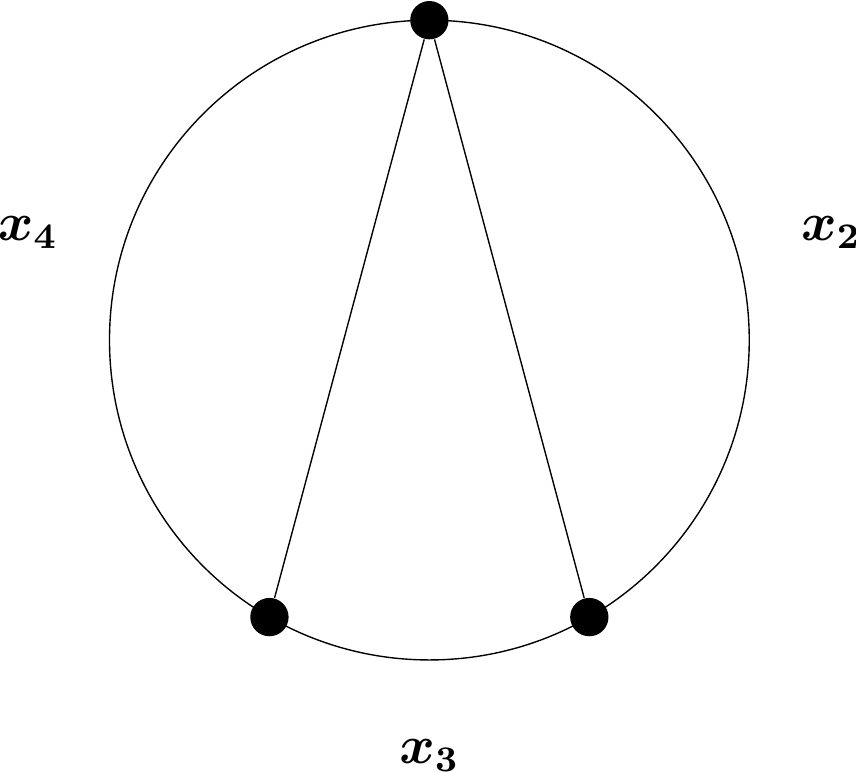} & \includegraphics[scale=0.6]{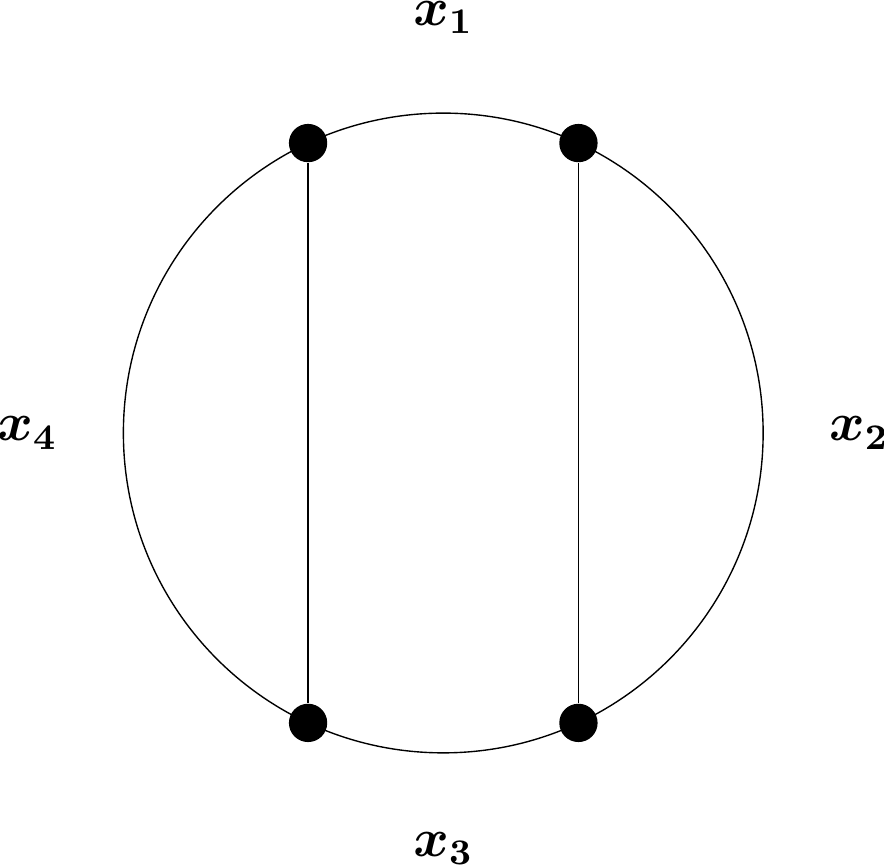} \\
			type A & type \^{A} & type B\\
		\end{tabular}
	\end{center}
	\caption{Different types of hamiltonian graphs with two chords.}\label{fig:mplus2}
\end{figure}

Next we will compute coefficients $N_{m-2}$ and $\tau$ in Eq. \eqref{eq:eq2} depending on $(x_1,x_2,x_3,x_4)$.

\begin{proposition}\label{prop:relm-2}
	Let $G$ be a graph in ${\cal H}(n,n+2)$ of type A with vector of c-path lengths $(x_1,x_2,x_3,x_4)$. Then,
	\begin{equation}\label{eq:Nm-2}
	\begin{array}{rcl}
	N_{m-2} & = & \displaystyle{1+2n+\sum_{1\leq i<j\leq 4}x_ix_j}.\\
	\tau & = & \displaystyle{n+(x_1+x_2)(x_3+x_4)+(x_1+x_4)(x_2+x_3)+\sum_{1\leq i<j<k\leq 4}x_ix_jx_k}.
	\end{array}
	\end{equation}

\end{proposition}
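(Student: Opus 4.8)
My plan rests on the observation that a graph $G\in{\cal H}(n,n+2)$ of type A is precisely a subdivision of $K_4$. Contracting each of the four $c$-paths $P_1,P_2,P_3,P_4$ turns $G$ into a graph on the four chord-endpoints $a,b,c,d$ consisting of the $4$-cycle $a\,b\,c\,d$ (coming from the $c$-paths) together with the two chords $\{a,c\}$ and $\{b,d\}$, i.e.\ $K_4$ with those chords playing the role of the diagonals of the $4$-cycle; conversely, subdividing the four $4$-cycle edges of $K_4$ into paths of lengths $x_1,x_2,x_3,x_4\ge 1$ (leaving the two diagonals alone) produces every type-A graph with $c$-path vector $(x_1,x_2,x_3,x_4)$. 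This is consistent with the remark that $\UMR(n,n+2)$ is a subdivision of $K_4$.

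Next I would prove a general counting lemma for subdivisions. If $H'$ is obtained from a graph $H$ by replacing each edge $e$ by an internally disjoint path $P_e$ of length $\ell_e\ge 1$, then in any connected spanning subgraph of $H'$ each path $P_e$ is either kept whole or broken at exactly one of its $\ell_e$ edges: breaking $P_e$ in two or more places would isolate an internal vertex of $P_e$, which has no neighbours outside $P_e$. Recording which edges $e$ are ``whole'' gives a map onto the connected spanning subgraphs of $H$, and the fibre over such a subgraph $F$ has size $\prod_{e\notin F}\ell_e$ (independently choosing where to break each deleted path). Since the lift of $F$ has $|F|+\sum_e(\ell_e-1)$ edges, we get $N_i(H')=\sum_F\prod_{e\notin F}\ell_e$ over connected spanning subgraphs $F$ of $H$ with $|F|=i-\sum_e(\ell_e-1)$, and in particular $\tau(H')=\sum_T\prod_{e\notin T}\ell_e$ over spanning trees $T$ of $H$.

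It then remains to apply this with $H=K_4$, $\ell_{P_i}=x_i$ and $\ell_{D_1}=\ell_{D_2}=1$ for the two diagonals, noting $\sum_e(\ell_e-1)=n-4$. For $N_{m-2}=N_n$ the relevant subgraphs $F$ of $K_4$ are those with $|F|=4$; every $4$-edge subgraph of $K_4$ is connected and spanning (the other three vertices span only three edges, so no vertex can be isolated), hence $N_n=\sum_{\{e,f\}\subseteq E(K_4)}\ell_e\ell_f$, which splits into pairs of $c$-path edges ($\sum_{1\le i<j\le4}x_ix_j$), mixed pairs ($2\sum_i x_i=2n$) and the diagonal pair ($1$), giving $1+2n+\sum_{1\le i<j\le4}x_ix_j$. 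For $\tau$ one lists the $16$ spanning trees of $K_4$ grouped by how many of the two diagonals they use: those using no diagonal (the $4$ spanning trees of the $4$-cycle, each omitting one $P_i$ and both diagonals) contribute $\sum_i x_i=n$; those using exactly one diagonal form two families of $4$ trees contributing $(x_1+x_2)(x_3+x_4)$ and $(x_1+x_4)(x_2+x_3)$; the $4$ trees using both diagonals each omit three of the $P_i$, contributing $\sum_{1\le i<j<k\le4}x_ix_jx_k$. Summing gives the stated formula.

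The subdivision lemma and the enumeration over $K_4$ (which is tiny) are routine; the place requiring care is the bookkeeping in the lemma — confirming that a lifted subgraph is connected, that the map onto connected spanning subgraphs of $H$ is surjective, and that the index shift $|F|=i-\sum_e(\ell_e-1)$ is exactly right — and then matching the three groups of terms to the three families of spanning trees of $K_4$. I expect the spanning-tree bookkeeping for $\tau$ (correctly reading off each omitted-edge product) to be the most error-prone step, though not conceptually hard. An alternative that avoids the subdivision formalism is to argue directly on $G$: for $N_{m-2}$, every $2$-edge cut consists of two cycle edges neither of whose separating arcs is bridged by a chord, and counting the chord-straddled cycle-edge pairs by inclusion--exclusion again yields $\sum_{1\le i<j\le4}x_ix_j$; for $\tau$, count spanning trees by how many of the two chords they omit, which produces the terms $n$, the two quadratics, and $\sum_{1\le i<j<k\le4}x_ix_jx_k$ in turn.
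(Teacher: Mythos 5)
Your proposal is correct, and the computations match the paper's formulas exactly (including the index bookkeeping: with $\sum_e(\ell_e-1)=n-4$ the coefficient $N_{m-2}=N_n$ corresponds to $4$-edge subgraphs of $K_4$ and $\tau=N_{n-1}$ to its $16$ spanning trees). The route is genuinely different in packaging from the paper's: the authors argue directly on $G$, classifying the deleted pairs (resp.\ triples) of edges by how many of them are chords and counting each class in place, whereas you first contract the four $c$-paths to reduce to $K_4$, prove a general fibre-counting lemma for subdivisions ($N_i(H')=\sum_F\prod_{e\notin F}\ell_e$ over connected spanning subgraphs $F$ of $H$ of the appropriate size), and then enumerate on $K_4$. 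Your subdivision lemma buys more: it yields every coefficient $N_i$ of any subdivision at once and would transfer verbatim to the $m=n+3$ case (subdivisions of $K_{3,3}$ and the prism), at the cost of having to verify the lemma's hypotheses (exactly one break per deleted path, surjectivity, the index shift) which the paper sidesteps by counting in situ. The final grouping of the $16$ spanning trees of $K_4$ by how many diagonals they contain is term-for-term the same case split as the paper's grouping by how many chords are removed, and the ``alternative'' you sketch in your last sentence is essentially the paper's proof verbatim.
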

\begin{proof}
	The coefficient $N_{m-2}$ counts the number of connected graphs after the deletion of two edges of $G$. If these two edges are the chords, then we have just one graph (the hamiltonian cycle). Besides, if the removed edges are one chord and one edge of the cycle, then, we can choose between $2$ chords and $n$ edges of the hamiltonian cycle. This gives $2n$ graphs. Finally, if the removed edges belong to the hamiltonian cycle, then both edges must be from different paths defined by the chords. This gives the number $\sum_{1\leq i<j\leq 4}x_ix_j$. The computation of $\tau$ is similar, we need to remove three edges of $G$ in order to obtain a spanning tree of $G$. We proceed depending of which edges we are dealing with:
	\begin{enumerate}
		\item {\em Two chords plus one edge of the cycle}: Any edge of the cycle can be removed after the removal of the two chords, so we count $n$ spanning trees of this type.
		\item {\em One chord plus two edges of the cycle}: After the removal of one chord, say $c_1$, we must remove one edge of each path defined by the other chord. Since these paths have lengths $x_1+x_2$ and $x_3+x_4$, the number of spanning trees is $(x_1+x_2)(x_3+x_4)$. The same applies for the other chord, say $c_2$, where now the paths lengths are $x_1+x_4$ and $x_2+x_3$.
		\item {\em Three edges of the cycle}: The graph remain connected after the deletion of three edges of the cycle only if these three edges belong to different paths defined by the chords. Hence the number of spanning trees in this case is given by the all different three products of $x_1,x_2,x_3,x_4$.
	\end{enumerate}
\end{proof}
Graphs of type B are not interesting for the study of uniformly most-reliable hamiltonian graphs, as next result states:
\begin{proposition}\label{prop:propAB}
	Let $(x_1,x_2,x_3,x_4)$ be a c-path lengths vector of $G_A$ and $G_B$, which are graphs of types A and B, respectively. Then, $\tau(G_A)> \tau(G_B)$ and $N_{m-2}(G_A) \geq N_{m-2}(G_B)$. 
\end{proposition}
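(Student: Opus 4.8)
The plan is to compute the two nontrivial coefficients $N_{m-2}$ and $\tau$ of a type~B graph $G_B$ in terms of its c-path lengths $(x_1,x_2,x_3,x_4)$, by running exactly the same case analysis used in Proposition~\ref{prop:relm-2} for type~A, and then to compare with the formulas in \eqref{eq:Nm-2}. First I would normalise the labelling. In a type~B graph the two chords are vertex-disjoint, and each chord together with the arc of the hamiltonian cycle joining its two endpoints bounds a cycle; thus each chord ``spans'' exactly one of the four c-paths. Two chords cannot span adjacent c-paths (that would force them to share an endpoint, which is the degenerate case $\hat A$), so up to a cyclic relabelling I may assume that the chords span the paths of lengths $x_1$ and $x_3$. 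Notice that the four chord-endpoints then occupy the same four cyclic positions on the hamiltonian cycle as in the type~A graph with the same vector; only the way they are paired into chords differs.

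The device that keeps the case analysis finite is the remark that, after deleting at most one edge from each of the four arcs, the resulting spanning subgraph is connected if and only if the auxiliary multigraph on the four chord-endpoints whose edges are the surviving chords together with the uncut arcs is connected --- the two ``stubs'' left when an arc is cut are pendant and irrelevant for connectivity among the chord-endpoints. For $N_{m-2}$: deleting both chords leaves the hamiltonian cycle ($1$ subgraph); deleting one chord and one cycle edge leaves cycle\,$+$\,chord, which is $2$-edge-connected, so every cycle edge is admissible ($2n$ subgraphs); deleting two cycle edges from two distinct arcs is ruled out exactly for the arcs of lengths $x_2$ and $x_4$, because cutting one edge in each of those two arcs separates the two chord-bounded cycles, so this case contributes $\sum_{1\le i<j\le 4}x_ix_j-x_2x_4$. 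Hence $N_{m-2}(G_B)=1+2n+\sum_{i<j}x_ix_j-x_2x_4$ and $N_{m-2}(G_A)-N_{m-2}(G_B)=x_2x_4\ge 0$.

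The computation of $\tau$ follows the same three cases. Two chords plus one cycle edge again yields $n$ spanning trees. For one chord plus two cycle edges, the graph left after removing one chord is cycle\,$+$\,chord, a theta graph whose three internally disjoint paths between the chord-endpoints have lengths $\alpha,\beta,1$; such a theta graph has $\alpha\beta$ spanning trees using the length-$1$ edge, where $\alpha$ and $\beta$ are the lengths of the two arcs into which the surviving chord splits the hamiltonian cycle, so this case contributes $x_1(x_2+x_3+x_4)+x_3(x_1+x_2+x_4)$. For three cycle edges they must lie in three distinct arcs, and the auxiliary-graph check shows that connectivity survives precisely when the single uncut arc has length $x_2$ or $x_4$, contributing $x_1x_3x_4+x_1x_2x_3$. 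Adding these up and subtracting the type~A value from \eqref{eq:Nm-2}, all terms cancel except
\[
\tau(G_A)-\tau(G_B)=2x_2x_4+x_2x_4(x_1+x_3)=x_2x_4\,(2+x_1+x_3)>0 ,
\]
which is the assertion.

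The parts that need genuine care, rather than bookkeeping, are: justifying the reduction to the four-vertex auxiliary multigraph, so that each case really reduces to a connectivity check on a graph with at most six edges; checking that the normalisation covers every non-crossing configuration (there are two non-crossing pairings of four cyclically ordered points, but the cyclic shift $x_i\mapsto x_{i+1}$ interchanges them, so one normalisation suffices); and recording that in a genuine type~B graph one has $x_1,x_3\ge 2$ and $x_2,x_4\ge 1$, while if $x_2x_4=0$ the configuration degenerates to type~$\hat A$ and the inequality for $N_{m-2}$ becomes an equality, in accordance with the statement.
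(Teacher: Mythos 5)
Your proposal follows the same route as the paper (compute $N_{m-2}(G_B)$ and $\tau(G_B)$ by the same three-case analysis and subtract), and it is correct; up to the cyclic relabelling that decides whether the non-crossing chords span the arcs of lengths $x_1,x_3$ or $x_2,x_4$, your formula for $N_{m-2}(G_B)$ agrees with the paper's. Where you genuinely improve on the paper is the third case of the $\tau$ computation: you correctly observe that when three cycle edges from three distinct arcs are deleted in a type~B graph, connectivity survives only when the surviving arc is one of the two arcs \emph{not} spanned by a chord, so this case contributes only two of the four triple products. The paper's displayed formula keeps the full $\sum_{1\leq i<j<k\leq 4}x_ix_jx_k$ and hence overcounts, giving $\tau(G_A)-\tau(G_B)=2x_2x_4$ instead of your $x_2x_4(2+x_1+x_3)$. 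A direct check confirms you are right: for $n=6$ with c-path vector $(2,1,2,1)$, the Matrix--Tree theorem gives $\tau(G_A)=36$ and $\tau(G_B)=30$, a difference of $6=x_2x_4(2+x_1+x_3)$, not $2$. Since both expressions are positive for a genuine type~B configuration (where $x_2,x_4\geq 1$), the proposition's conclusion is unaffected, but your derivation is the one that should be recorded; your closing remarks on the auxiliary four-vertex multigraph and on the degeneration to type~\^{A} when $x_2x_4=0$ are also worth keeping.
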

\begin{proof}
	Following the ideas behind the proof of proposition \ref{prop:relm-2} one can see that
	$$
	\begin{array}{l}
	\displaystyle{N_{m-2}(G_B)=1+2n+\big(\sum_{1\leq i<j\leq 4}x_ix_j\big)-x_1x_3,} \\
	\displaystyle{\tau(G_B)=n+(x_1+x_2+x_4)x_3+(x_2+x_4+x_3)x_1+\sum_{1\leq i<j<k\leq 4}x_ix_jx_k}. 
	\end{array}
	$$
	A simple comparison of both formulas with the ones given in Proposition \ref{prop:relm-2} gives the desired result, 
	$$
	\begin{array}{l}
	N_{m-2}(G_A)=N_{m-2}(G_B)+x_1x_3 \geq N_{m-2}(G_B) \textrm{ and } \\
	\tau(G_A)=\tau(G_B)+2x_2x_4>\tau(G_B).
	\end{array}
	$$
\end{proof}

In order to obtain those integer vectors ${\bf x}=(x_1,x_2,x_3,x_4)$ such that produce uniformly most-reliable hamiltonian graphs, we should maximize both $N_{m-2}$ and $\tau$ in equation \eqref{eq:eq2}. To this end, let us consider the set of c-path length vectors $X_n=\{(x_1,x_2,x_3,x_4) \in \mathbb{Z}^4_+ \, | \, x_1+x_2+x_3+x_4=n \}$ and the functions
\begin{equation}
\begin{array}{c@{\qquad \text{and} \qquad}c}
\begin{array}{cccc}
f\colon & X_n & \longrightarrow & \mathbb{Z} \\
& {\bf x} & \longmapsto & \tau
\end{array}
&
\begin{array}{cccc}
g\colon & X_n & \longrightarrow & \mathbb{Z} \\
& {\bf x} & \longmapsto & N_{m-2}
\end{array}
\end{array}
\label{eq:functfg}
\end{equation}
where $\tau$ and $N_{m-2}$ are given in Prop. \ref{prop:relm-2}.
Let $n=4k+\alpha$, $\alpha \in \{0,1,2,3\}$. We also define 
\[
D({\bf x})=\sum_{i=1}^4 |x_i-k|
\]
as a measure of closeness of any ${\bf x}=(x_1,x_2,x_3,x_4) \in X_n$ to the constant vector $(k,k,k,k)$. Then the elements of $X_n$ can be measured according to its closeness to this constant vector. For instance, when $\alpha=1$, there is just one vector in $X_n$ with $D=1$, which is $(k+1,k,k,k)$ (any cyclic permutation of this vector of c-path lengths gives an isomorphic graph, so we do not take them into account). Notice that $D$ must be odd when $\alpha=1$, so next $D$ is $3$ and the set of vectors of c-path lengths with $D=3$ is $\{(k+1,k+1,k-1,k),(k,k+2,k-1,k),(k,k+2,k,k-1)\}$. \\


The following mappings $\sigma$ and $\omega$ are a useful tool for our purposes: given a graph with vector of c-path lengths ${\bf x}=(x_1,x_2,x_3,x_4) \in X_n$, the mapping $\sigma$ moves just one vertex of a chord in such a way that two contiguous path lengths are modified by one unity each (see Fig. \ref{fig:sigma_omega}), that is, $\sigma({\bf x})=(x_1,x_2+1,x_3-1,x_4)$. Besides $\omega$ moves two vertices corresponding to different chords in such a way that two non-contiguous path lengths are modified by one unity each, that is, $\omega({\bf x})=(x_1,x_2+1,x_3,x_4-1)$  (see Fig.  \ref{fig:sigma_omega}). For instance, in the case $\alpha=1$, starting from the single vector with $D=1$ one can obtain all the vectors of c-path lengths for $D=3$: $\sigma(k+1,k,k,k)=(k+1,k+1,k-1,k)$ and $\omega(k+1,k,k,k)=(k+1,k+1,k,k-1)$ (both represent the same vector of c-path lengths making an appropriate rotation). To obtain the other two vectors of $D=3$, just first rotate the initial vector and apply the operations, $\sigma(k,k+1,k,k)=(k,k+2,k-1,k)$ and 
$\omega(k,k+1,k,k)=(k,k+2,k,k-1)$. With these definitions the following assertion holds.

\begin{figure}[htb]
	\begin{center}
		\begin{tabular}{ccc}
			\includegraphics[scale=0.65]{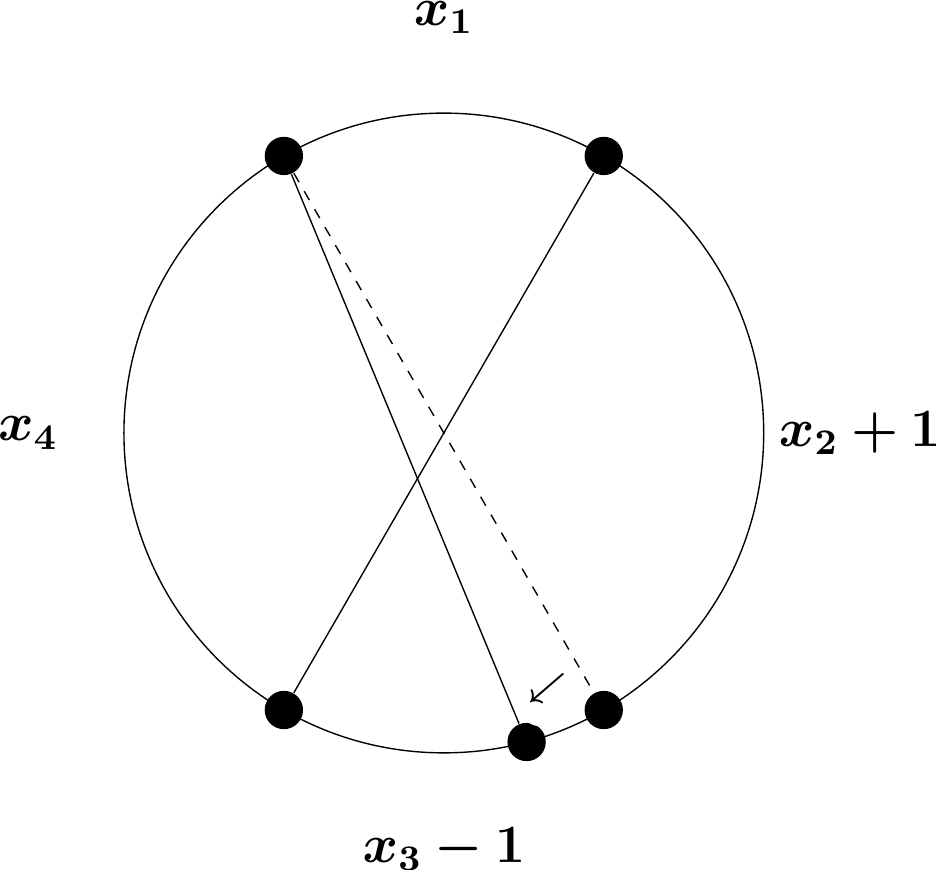} &  & \includegraphics[scale=0.65]{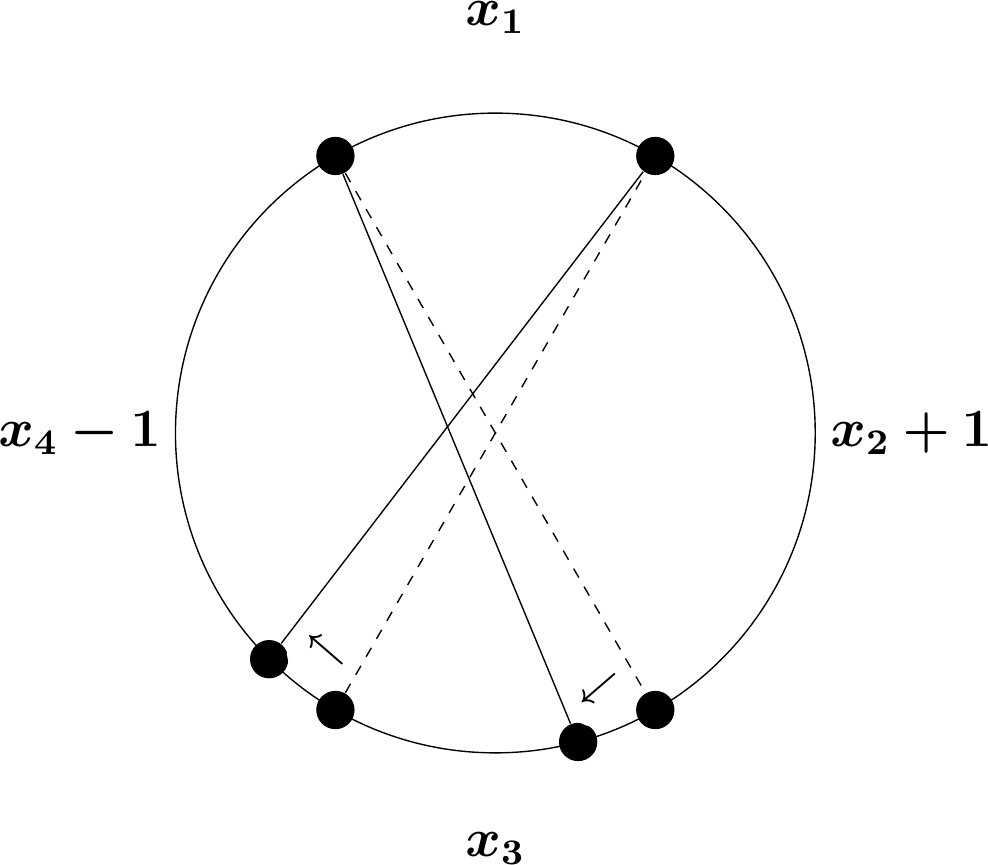} \\
			$\sigma({\bf x})=(x_1,x_2+1,x_3-1,x_4)$ & \hspace{0.2cm} & $\omega({\bf x})=(x_1,x_2+1,x_3,x_4-1)$ \\
		\end{tabular}
	\end{center}
	\caption{(a) Graphical representation of $\sigma({\bf x})$ and $\omega({\bf x})$, where ${\bf x}=(x_1,x_2,x_3,x_4)$}\label{fig:sigma_omega}
\end{figure}

\begin{lemma}\label{lem:lemma}
	Let ${\bf x}=(x_1,x_2,x_3,x_4)\in X_n$ and ${\bf y}=(y_1,y_2,y_3,y_4) \in X_n$ such that $D({\bf y})=D({\bf x})+2$ and either ${\bf y}=\sigma({\bf x})$ or ${\bf y}=\omega({\bf x})$. Then $f({\bf x}) \geq f({\bf y})$ and $g({\bf x}) \geq g({\bf y})$, where $f$ and $g$ are defined by equation \eqref{eq:functfg}.
\end{lemma}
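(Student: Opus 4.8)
The plan is to put $f$ and $g$ into closed forms in which the effect of a single elementary move is transparent, compute exactly how $\sigma$ and $\omega$ change each of them, and then extract the sign of these changes from the hypothesis $D(\mathbf{y})=D(\mathbf{x})+2$. Using the identity $\sum_{1\le i<j\le 4}x_ix_j=\tfrac12\big(n^2-\sum_i x_i^2\big)$, the function $g$ of Proposition~\ref{prop:relm-2} becomes $g(\mathbf{x})=1+2n+\tfrac12\big(n^2-\sum_i x_i^2\big)$, so it depends on $\mathbf{x}$ only through $\sum_i x_i^2$. Expanding the two products in the formula for $\tau$, one checks that $f(\mathbf{x})=n+\sum_{1\le i<j\le 4}x_ix_j+(x_1x_3+x_2x_4)+\sum_{1\le i<j<k\le 4}x_ix_jx_k$; that is, $f$ is the degree-two elementary symmetric polynomial, plus the ``opposite-pair'' term $x_1x_3+x_2x_4$, plus the degree-three elementary symmetric polynomial.

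Next I would substitute the moves. For $\mathbf{y}=\sigma(\mathbf{x})=(x_1,x_2+1,x_3-1,x_4)$ a direct computation gives $g(\mathbf{y})-g(\mathbf{x})=x_3-x_2-1$ and, after collecting terms, $f(\mathbf{y})-f(\mathbf{x})=(x_3-x_2-1)(1+x_1+x_4)+(x_4-x_1)$. For $\mathbf{y}=\omega(\mathbf{x})=(x_1,x_2+1,x_3,x_4-1)$ the same procedure yields $g(\mathbf{y})-g(\mathbf{x})=x_4-x_2-1$ and $f(\mathbf{y})-f(\mathbf{x})=(x_4-x_2-1)(2+x_1+x_3)$, where all $x_i$ here denote the coordinates of the original $\mathbf{x}$. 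The one slightly delicate point is tracking the degree-three part under the moves; grouping its four monomials according to whether they contain both modified indices, exactly one of them, or neither makes the bookkeeping routine.

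The last step converts $D(\mathbf{y})=D(\mathbf{x})+2$ into sign information. Both $\sigma$ and $\omega$ change exactly two coordinates, each by $\pm1$, so $D(\mathbf{y})-D(\mathbf{x})$ is a sum of two terms of the form $|x_i\pm1-k|-|x_i-k|$, each equal to $+1$ or $-1$; hence $D$ increases by $2$ only when both equal $+1$. Since for an integer $t$ one has $|t+1|-|t|=+1\iff t\ge0$ and $|t-1|-|t|=+1\iff t\le0$, for $\sigma$ the hypothesis forces $x_2\ge k$ and $x_3\le k$, while for $\omega$ it forces $x_2\ge k$ and $x_4\le k$. In particular $x_3-x_2-1\le-1$ in the first case and $x_4-x_2-1\le-1$ in the second, so $g(\mathbf{y})-g(\mathbf{x})\le-1<0$ in both. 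For $f$: in the $\sigma$ case, since $1+x_1+x_4>0$, $f(\mathbf{y})-f(\mathbf{x})\le-(1+x_1+x_4)+(x_4-x_1)=-1-2x_1<0$ using $x_1\ge0$; in the $\omega$ case, $f(\mathbf{y})-f(\mathbf{x})=(x_4-x_2-1)(2+x_1+x_3)<0$ since $2+x_1+x_3>0$. This yields $f(\mathbf{x})\ge f(\mathbf{y})$ and $g(\mathbf{x})\ge g(\mathbf{y})$ (in fact strict inequalities). The only real obstacle is the expansion of $\tau$ and the piecewise-linear analysis of $D$, both purely computational with no conceptual content.
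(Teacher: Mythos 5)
Your proof is correct and follows essentially the same route as the paper's: compute the differences $f({\bf x})-f({\bf y})$ and $g({\bf x})-g({\bf y})$ in closed form for each of the two moves, then convert the hypothesis $D({\bf y})=D({\bf x})+2$ into the sign conditions $x_2\ge k$ and $x_3\le k$ (resp.\ $x_4\le k$) via the equality cases of $|a+1|-|a|\le 1$ and $|b-1|-|b|\le 1$. One remark in your favour: for the $\sigma$ move your identity $f({\bf y})-f({\bf x})=(x_3-x_2-1)(1+x_1+x_4)+(x_4-x_1)$ is the correct one, whereas the paper's stated $f({\bf x})-f({\bf y})=(x_2-x_3)(x_1+x_4+1)$ omits an additional $1+2x_1$ (e.g.\ for ${\bf x}=(1,1,1,1)$, ${\bf y}=(1,2,0,1)$ the true difference is $3$, not $0$); since the omitted term is nonnegative the paper's conclusion is unaffected, but your bookkeeping is the sharper of the two, and your $\omega$-case formula agrees exactly with the paper's after expansion.
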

\begin{proof}
	First we suppose that ${\bf y}=\sigma({\bf x})$, that is, $y_1=x_1$, $y_2=x_2+1$, $y_3=x_3-1$ and $y_4=x_4$. From $D({\bf y})=D({\bf x})+2$ we have that 
	\[
	|(x_2+1)-k|-|x_2-k|+ |(x_3-1)-k|-|x_3-k|=2.
	\]
	Which is equivalent to
	\begin{equation}\label{eq:lema}
	\big( |(x_2-k)+1|-|x_2-k| \big)+ \big(|(x_3-k)-1|-|x_3-k| \big)=2.
	\end{equation}
	Taking into account that $|a+1|-|a| \leq 1$ and $|b-1|-|b| \leq 1$ and both equalities hold if and only if $a \geq 0$ and $b \leq 0$, respectively. Then equation \eqref{eq:lema} is equivalent to $x_2 \geq k$ and $x_3 \leq k$. Besides, from equation \eqref{eq:Nm-2} we have $f({\bf x})-f({\bf y})=(x_2-x_3)(x_1+x_4+1)$. From $x_2 \geq k$ and $x_3 \leq k$ we have  $(x_2-x_3)\geq 0$ and hence $f({\bf x})-f({\bf y})=(x_2-x_3)(x_1+x_4+1) \geq 0$ since both factors are positive. Besides, again from equation \eqref{eq:Nm-2}, $g({\bf x})-g({\bf y})=x_2-x_3+1$ which is also positive since $(x_2-x_3)\geq 0$.\\
	In the case when ${\bf y}=\omega({\bf x})$, we can observe that $D({\bf y})=D({\bf x})+2$ is equivalent to $x_2 \geq k$ and $x_4 \leq k$. Besides, $f({\bf x})-f({\bf y})=(x_1+2)x_2+(x_2+1)x_3-(x_1+x_3+2)x_4+x_1+2$. Taking into account $x_2 \geq k$ and $x_4 \leq k$, we have $f({\bf x})-f({\bf y})  =  (x_1+2)x_2+(x_2+1)x_3-(x_1+x_3+2)x_4+x_1+2 \geq (x_1+2)k+(k+1)x_3-(x_1+x_3+2)k+x_1+2 =x_1+x_3+2 \geq 0$. Besides $g({\bf x})-g({\bf y})=x_2-x_4+1$ which is also positive.
\end{proof}

\begin{theorem}\label{th:n+2}
	Let $G$ be a uniformly most reliable graph in ${\cal H}(n,n+2)$. Then, $G$ is of type A with vector of c-path lengths
	\begin{itemize}
		\item $(k,k,k,k)$ if $n=4k$ for some positive integer $k$.
		\item $(k+1,k,k,k)$ if $n=4k+1$ for some positive integer $k$.
		\item $(k+1,k,k+1,k)$ if $n=4k+2$ for some positive integer $k$.
		\item $(k+1,k+1,k+1,k)$ if $n=4k+3$ for some positive integer $k$.
	\end{itemize}
	
\end{theorem}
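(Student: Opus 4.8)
The plan is to reduce the statement to maximising, over ${\cal H}(n,n+2)$, the only two coefficients of the reliability polynomial \eqref{eq:eq2} that vary across the family, namely $N_{m-2}=g({\bf x})$ and $\tau=f({\bf x})$, and then to read off the extremal vector of c-path lengths; the case $n=4k+2$ is the only one that needs more than $N_{m-2}$.

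First I would observe that a uniformly most reliable $G\in{\cal H}(n,n+2)$ maximises \emph{both} $N_{m-2}$ and $\tau$. Since every other coefficient of \eqref{eq:eq2} depends only on $n$ and $m$, for any competitor $H$ we have $\mbox{Rel}(G,p)-\mbox{Rel}(H,p)=\big(N_{m-2}(G)-N_{m-2}(H)\big)p^{m-2}(1-p)^2+\big(\tau(G)-\tau(H)\big)p^{m-3}(1-p)^3$; dividing by $p^{m-3}(1-p)^2>0$ on $(0,1)$ and letting $p\to 1$, then $p\to 0$, gives $N_{m-2}(G)\ge N_{m-2}(H)$ and $\tau(G)\ge\tau(H)$. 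By Proposition~\ref{prop:propAB} the type-A graph on a given c-path vector has strictly larger $\tau$ than the type-B graph on the same vector, so $G$ cannot be of type B; the only other possibility is a type-A graph with one c-path length equal to $0$. Hence from now on $G$ is a type-A graph with ${\bf x}\in\mathbb{Z}_{\ge 0}^4$ and $x_1+x_2+x_3+x_4=n$.

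Next I would pin down the multiset $\{x_1,x_2,x_3,x_4\}$ from $N_{m-2}$ alone. By Proposition~\ref{prop:relm-2}, for a type-A graph $N_{m-2}=1+2n+\sum_{i<j}x_ix_j=1+2n+\tfrac12\big(n^2-\sum_i x_i^2\big)$, so maximising $N_{m-2}$ is the same as minimising $\sum_i x_i^2$ over non-negative integer $4$-tuples summing to $n$. The unique minimiser is the balanced tuple, which has all entries positive (so the degenerate case above is excluded automatically): for $n=4k,4k+1,4k+2,4k+3$ it is $\{k,k,k,k\}$, $\{k+1,k,k,k\}$, $\{k+1,k+1,k,k\}$, $\{k+1,k+1,k+1,k\}$ respectively. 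For $n=4k$, $4k+1$ and $4k+3$ this multiset corresponds, up to cyclic rotation and reflection, to a \emph{single} c-path vector, which is precisely the one in the statement; together with the previous paragraph this settles those three cases.

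Finally, for $n=4k+2$ the balanced multiset $\{k+1,k+1,k,k\}$ gives two non-isomorphic graphs --- the two largest parts adjacent, $(k+1,k+1,k,k)$, or opposite, $(k+1,k,k+1,k)$ --- with the same value of $N_{m-2}$, so the tie must be broken by $\tau$. In the formula of Proposition~\ref{prop:relm-2} the term $\sum_{i<j<\ell}x_ix_jx_\ell$ depends only on the multiset and is therefore equal for the two, while $(x_1+x_2)(x_3+x_4)+(x_1+x_4)(x_2+x_3)$ equals $2(2k+1)^2$ in the opposite arrangement and $(2k)(2k+2)+(2k+1)^2=2(2k+1)^2-1$ in the adjacent one; thus $\tau(k+1,k,k+1,k)=\tau(k+1,k+1,k,k)+1$, and $G$ must be $(k+1,k,k+1,k)$. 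I expect this $n=4k+2$ tie to be the main --- though rather mild --- obstacle: neither $N_{m-2}$ nor the ``balance'' heuristic singles out the correct arrangement, and one has to appeal to the exact value of $\tau$. An alternative to the square-sum argument is to iterate Lemma~\ref{lem:lemma}: from any ${\bf x}$ a reverse $\sigma$- or $\omega$-move is available exactly when $D({\bf x})$ exceeds its minimum, and each such move weakly increases $f$ and $g$, which reduces the search to the finitely many vectors of minimal $D$, to be compared directly as above. One can moreover check that the chosen vector maximises $\tau$ as well as $N_{m-2}$ over all of ${\cal H}(n,n+2)$, so by Observation~\ref{obs:rel} it is in fact the unique uniformly most reliable graph, and the statement upgrades to an equivalence.
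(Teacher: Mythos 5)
Your proof is correct, and it reaches the conclusion by a genuinely different and more elementary route than the paper. The paper first passes to a continuous relaxation of $f$ and $g$, computes a critical point and checks that the Hessian is negative definite to locate the balanced point $(\tfrac n4,\tfrac n4,\tfrac n4,\tfrac n4)$, and then needs Lemma~\ref{lem:lemma} (the $\sigma$- and $\omega$-moves) to transfer this back to the integer lattice, reducing to the vectors of minimum $D$ and comparing $f$ and $g$ on each by hand. You bypass all of that for the coefficient $N_{m-2}$ via the identity $\sum_{i<j}x_ix_j=\tfrac12\bigl(n^2-\sum_i x_i^2\bigr)$, so that maximizing $g$ is exactly minimizing $\sum_i x_i^2$ over non-negative integer $4$-tuples summing to $n$, whose unique minimizing multiset is the balanced one; this simultaneously kills the degenerate type-\^{A} case and, for $\alpha\in\{0,1,3\}$, forces the arrangement outright, leaving only the $\alpha=2$ tie, which you break correctly by the exact $\tau$ computation (the $e_3$ term is arrangement-invariant and $2(2k+1)^2$ versus $2(2k+1)^2-1$ checks out against the paper's values). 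You also make explicit, via the limits $p\to 0$ and $p\to 1$, the step that a uniformly most reliable graph must maximize each of $N_{m-2}$ and $\tau$ separately --- the paper takes this for granted. The one place where you claim more than you prove is the final sentence: upgrading the theorem to an equivalence requires showing that your vector maximizes $\tau$ over \emph{all} of $X_n$, not merely over the arrangements of the balanced multiset, and that is precisely the work the paper's Lemma~\ref{lem:lemma} and case enumeration do; since the theorem as stated is only a necessary condition, this does not affect the validity of your proof of it, but the "one can moreover check" would need the paper's (or an equivalent) argument to be filled in.
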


\begin{proof}By proposition \ref{prop:propAB} we have to take into account only graphs of type A. In order to maximize the coefficients of the reliability polynomial, let us consider the discrete functions $f$ and $g$ defined in \eqref{eq:functfg}. Our goal is to find the maximum of $f$ and $g$ in $X_n$. There is a natural extension of these functions to the continuous side just setting $\hat{f}:\hat{X_n} \rightarrow \mathbb{R}$, $\hat{g}:\hat{X_n} \rightarrow \mathbb{R}$ where $\hat{X_n}=\{(x_1,x_2,x_3,x_4) \in \mathbb{R}^4 \, | \, x_1+x_2+x_3+x_4=n \} \subset \mathbb{R}^4$ and $\hat{f}$ and $\hat{g}$ defined as $f$ and $g$, respectively. \\
	
	First we find the maximum of $\hat{f}$ in $\hat{X_n}$. To this end one can apply the method of Lagrange's multipliers or simply define $\tilde{f}:\mathbb{R}^3 \rightarrow \mathbb{R}$ as ${\tilde f}(x_1,x_2,x_3)=\hat{f}(x_1,x_2,x_3,n-x_1-x_2-x_3-x_4)$ to avoid the restriction given by $\hat{X_n}$, that is
	$$ \begin{array}{lll}
	\tilde{f}(x_1,x_2,x_3)& = &
	{\left(n - x_{1} -
		x_{2} - x_{3}\right)} x_{1} x_{2} + {\left(n - x_{1} - x_{2} -
		x_{3}\right)} x_{1} x_{3} +  \\
	& &
	{\left(n - x_{1} - x_{2} - x_{3}\right)}
	x_{2} x_{3} + x_{1} x_{2} x_{3} + {\left(n - x_{1} - x_{2}\right)}
	{\left(x_{1} + x_{2}\right)} + \\
	& &
	{\left(n - x_{2} - x_{3}\right)}
	{\left(x_{2} + x_{3}\right)} + n.
	\end{array}
	$$
	The critical values of $\tilde{f}(x_1,x_2,x_3)$ are given by the solutions of $\nabla \tilde{f}=(\frac{\partial \tilde{f}}{\partial x_1},\frac{\partial \tilde{f}}{\partial x_2},\frac{\partial \tilde{f}}{\partial x_3})=(0,0,0)$. In our case, 
	$$
	\begin{array}{lll}
	\dfrac{\partial \tilde{f}}{\partial x_1} & = &  
	{\left(n -
		x_{1} - x_{2} - x_{3}\right)} x_{2} - x_{1} x_{2} + {\left(n - x_{1} -
		x_{2} - x_{3}\right)} x_{3} - x_{1} x_{3} + n \\
	& & - 2x_{1} - 2x_{2}. \\
	\dfrac{\partial \tilde{f}}{\partial x_2} & = &  
	{\left(n - x_{1} - x_{2} - x_{3}\right)} x_{1} - x_{1} x_{2} +
	{\left(n - x_{1} - x_{2} - x_{3}\right)} x_{3} - x_{2} x_{3} + 2n \\
	& &
	-2x_{1} - 4x_{2} - 2x_{3}. \\
	\dfrac{\partial \tilde{f}}{\partial x_3} & = &  
	{\left(n - x_{1} - x_{2} -
		x_{3}\right)} x_{1} + {\left(n - x_{1} - x_{2} - x_{3}\right)} x_{2} -
	x_{1} x_{3} - x_{2} x_{3} + n\\
	& &
	-2 \, x_{2} - 2 \, x_{3}.
	\end{array}
	$$
	It happens that $(\frac{n}{4},\frac{n}{4},\frac{n}{4}) \in \mathbb{R}^3$ is the unique solution of $\nabla \tilde{f}=0$ satisfying $x_i  \geq 0$. Moreover, the Hessian matrix $H$ of $\tilde{f}$ at $(\frac{n}{4},\frac{n}{4},\frac{n}{4})$ is
	$$
	H(\frac{n}{4},\frac{n}{4},\frac{n}{4})= - \left(\begin{array}{ccc}
	n + 2 & \frac{n}{2} + 2 & \frac{n}{2} \\
	\frac{n}{2} + 2 & n + 4 & \frac{n}{2} + 2 \\
	\frac{n}{2} & \frac{n}{2} + 2 & n + 2
	\end{array}\right)
	$$
	which is negative definite for all $n \geq 1$. Since $\hat{X_n}$ is a convex set, then $(\frac{n}{4},\frac{n}{4},\frac{n}{4}) \in \mathbb{R}^3$ is a global maximum of $\tilde{f}$. This means that $(\frac{n}{4},\frac{n}{4},\frac{n}{4},\frac{n}{4}) \in \mathbb{R}^4$ is a global maximum of $\hat{f}$. Using a similar reasoning we can see that the same happens with function $\hat{g}$, where also $(\frac{n}{4},\frac{n}{4},\frac{n}{4},\frac{n}{4}) \in \mathbb{R}^4$ is a global maximum of $\hat{g}$. \\
	
	Now we move back to the discrete side of the problem. If $n \equiv 0 \pmod 4$, that is, $n=4k$, then $(\frac{n}{4},\frac{n}{4},\frac{n}{4},\frac{n}{4})=(k,k,k,k) \in X_n$ and hence this is precisely the vector of c-path lengths that gives the maximum value for $\tau$ and $N_{m-2}$ in \eqref{eq:functfg}. The problem becomes more difficult for the remaining cases, that is, when $n=4k+\alpha$, $\alpha \in \{1,2,3\}$. Then $(\frac{n}{4},\frac{n}{4},\frac{n}{4},\frac{n}{4})=(k+\frac{\alpha}{4},k+\frac{\alpha}{4},k+\frac{\alpha}{4},k+\frac{\alpha}{4}) \notin X_n$ and we want to find the vectors of c-path lengths that gives the maximum value of $f$ and $g$ in $X_n$. By Lemma \ref{lem:lemma}, if an element of $X_n$ is a maximum of the functions $f$ and $g$ then it must have minimum $D$, hence we just have to look at those vectors of c-path lengths with minimum $D$. When $\alpha=1$, there is only one vector of c-path lengths with minimum $D$, which is $(k+1,k,k,k)$. \\
	
	For $\alpha=2$, the set of vectors with minimum $D$ is $\{(k+2,k,k,k),(k+1,k+1,k,k),(k+1,k,k+1,k)\}$. According to equation \eqref{eq:Nm-2}, the last two vectors are maximum with the same value for $g$, which is $6k^{2} + 14k + 6$, meanwhile for $f$ we have that $(k+1,k,k+1,k)$ has maximum value $f(k+1,k,k+1,k)=4k^3 + 14k^2 + 14k + 4$. \\
	
	Finally, for $\alpha=3$, the set of vectors $\{(k+3,k,k,k),(k+2,k+1,k,k),(k+2,k,k+1,k),(k+1,k+1,k+1,k)\}$ has minimum $D$. The vector of c-path lengths $(k+1,k+1,k+1,k)$ achieves the maximum both for $f$ and $g$, where $f(k+1,k+1,k+1,k)=4k^3 + 17k^2 + 22k + 8$ and $g(k+1,k+1,k+1,k)=6k^2 + 17k + 10$.
	
\end{proof}

\begin{example}
For $n=11$ and $m=13$, the uniformly most reliable hamiltonian graph must have vector of c-path lengths $(3,3,3,2)$ and $N_{m-2}=68$ and $\tau=152$ (according to Theorem \ref{th:n+2}, case $k=2$ and $\alpha=3$). We also used \textit{Nauty}\footnote{http://users.cecs.anu.edu.au/~bdm/nauty} to generate all non-isomorphic graphs with $11$ vertices and $13$ edges. There are 33851 of such graphs. Then we filtered the hamiltonian ones using a function from a \textit{Python} library called \textit{Graphx}. There are 56 hamiltonian graphs in this case and the uniformly most reliable hamiltonian graph is the expected one. We also computed the uniformly most reliable graph among the total set of graphs (which corresponds to a particular subdivision of $K_4$, as expected). We have depicted both graphs in Figure \ref{fig:rel_max_min}. The computational method to obtain the reliability polynomial is explained in Section \ref{sec:Comp_asp}.
\end{example}

\begin{figure}[htb]
	\begin{center}
		\begin{tabular}{ccc}
			\includegraphics[scale=0.65]{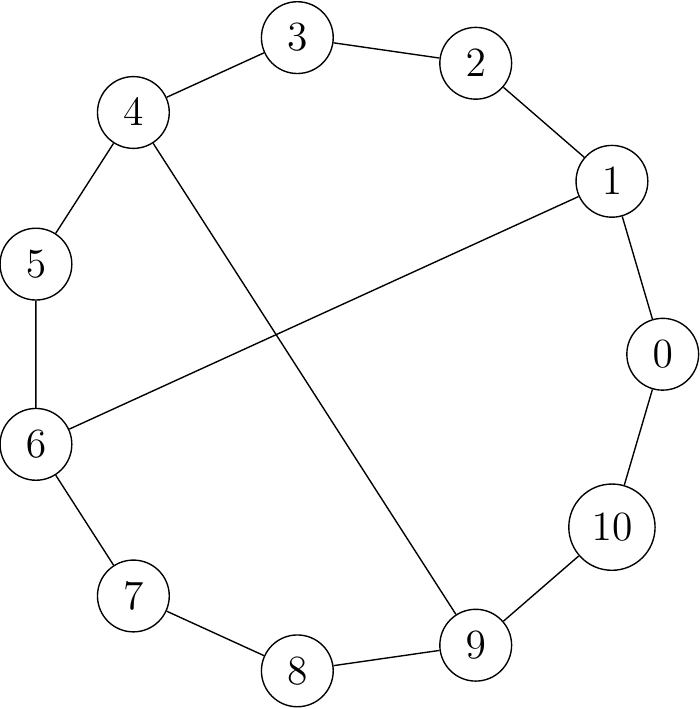} &  & \includegraphics[scale=0.65]{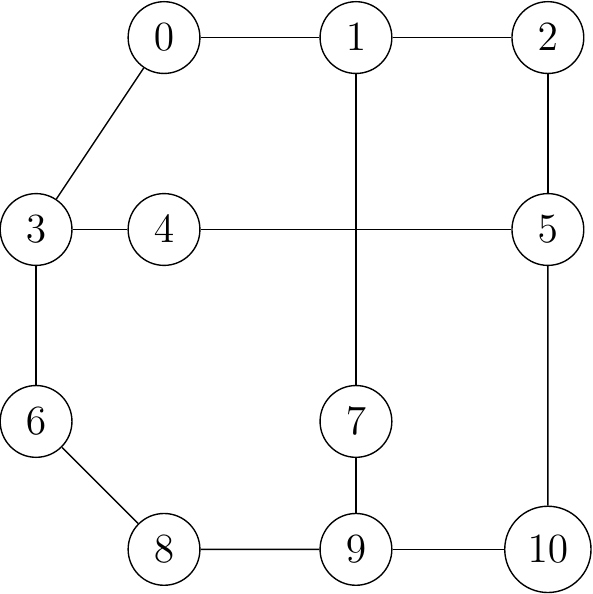} \\
			$N_{m-2}=68$ and $\tau=152$. & \hspace{0.1cm} & $N_{m-2}=70$ and $\tau=160$. \\
		\end{tabular}
	\end{center}
\caption{Uniformly most realiable graphs in ${\cal H}(11,13)$ and ${\cal G}(11,13)$, respectively, and the corresponding coefficients $N_{m-2}$ and $\tau$ of their reliability polynomials.}
	 \label{fig:rel_max_min}
\end{figure}

\subsection{The fair cake-cutting graph $FCG_{n,c}$}\label{sec:faircake}

The uniformly most reliable hamiltonian graphs for $m=n+1$ and $m=n+2$ can be described in terms of a {\em fair cake-cutting process}. There are different definitions for this process in the literature (see \cite{brandt2016handbook,Rom17}). For our purposes, we have to cut a cake (hamiltonian cycle) performing a given number of cuts $c$. The idea is to deliver each part as equal (fair) as possible for every guest. Each cut of the cake is represented in the graph by a diametrical chord, that is, one edge joining two opposite vertices. Given the order $n$ of the hamiltonian cycle and the number of chords $c$ (`cuts' in terms of cake cutting), we present an algorithm to construct what we call {\em The fair cake-cutting graph} $FCG_{n,c}$.  
A detailed explanation of algorithm \ref{algo_faircake} is the following: With the given number of nodes, the algorithm constructs its corresponding cycle. After, the vertices of its hamiltonian cycle are stored into a list. Then,  the `separation' between vertices of degree $3$ (corresponding to the endpoints of the chords) is calculated. This `separation' is also stored in the variable `position' that in the first iteration of the loop will be one of the first chord endpoints. The loop in line 5 places all the cuts to the previously generated cycle. Line 6-9: Each edge chord is added to the graph using the vertices stored inside the hamiltonian cycle list in the corresponding positions. Each position is calculated by adding the previous position plus the separation. The last endpoint of each cut must have maximum distance from its first endpoint, this distance is equal to the half of the nodes of the graph. Notice that the positions of the list must be a natural number but its decimal part is not overlooked. This part is added when the next position is calculated. Finally, the cycle graph with all the chords placed is returned. The idea behind algorithm \ref{algo_faircake} is to balance the distance between the chords.\\
\IncMargin{1em}
\begin{algorithm}[htb]
	\SetKwData{Left}{left}\SetKwData{This}{this}\SetKwData{Up}{up}
	\SetKwFunction{Union}{Union}\SetKwFunction{FindCompress}{FindCompress}
	\SetKwInOut{Input}{input}\SetKwInOut{Output}{output}
	
	\Input{$n \leftarrow$ Number of desired vertices for the $FCG$\\
		$c \leftarrow$ Number of desired chords for the $FCG$}
	\Output{$FCG$}
	\BlankLine
	\BlankLine
	\textbf{\textit{graph}} $fcg \leftarrow$ Cycle with edges $(0,1),(1,2),\dots,(n-1,0)$. \\
	
	\BlankLine
	\textbf{\textit{float}} $separation = n/(2c)$ \\
	\textbf{\textit{float}} $position = separation$ \\
	\textbf{\textit{int}} $placedch = 0$ \\
	\BlankLine
	\While{ $placedch\leq c$}{
		Add edge $(int(position)- 1, int(position+n/2)- 1)$ \\
		//-1 due that the nodes begin with 0\\
		\BlankLine
		$position = position + separation$ //note that the variable is still a float\\
		\BlankLine
		$placedch = placedh + 1$	
	}
	\Return graph 
	\caption{Fair Cake construction}\label{algo_faircake}	
\end{algorithm}\DecMargin{1em}

\begin{example}
For $n=16$ and $c=3$, we start from a cycle graph $C_{16}$ with edge set $\{(0,1),(1,2), \dots, (15,0)\}$. This table summarizes the procedure that the algorithm follows to calculate the positions $(p_1, p_2)$ of the hamiltonian path list for each chord $(v_1, v_2)$:
\begin{table}[htb]
	\begin{center}
		\begin{tabular}{|c|c|c|c|c|c|c|}
			\hline
					\# cut & Separation & Accumulated & Total & $p_1$ & $p_2$ & $(v_1, v_2)$\\ \hline \hline
					1st & $16/6 = 2.\overline{6}$ & 0 & $2.\overline{6}$ & 2 & 2 + 8 = 10 & $(2-1, 10-1)$\\ \hline
					2nd & $16/6 = 2.\overline{6}$ & $2.\overline{6}$ & $5.\overline{3}$ & 5 & 5 + 8 = 13 & $(5-1, 13-1)$\\ \hline
					3rd & $16/6 = 2.\overline{6}$ & $5.\overline{3}$ & 8 & 8 & 8 + 8 = 16 & $(8-1, 16-1)$\\ \hline
			\hline
		\end{tabular}
	\end{center}
	\caption{Algorithm of construction of $FCG_{16,3}$}\label{tab:faircakeexample}
\end{table}
\noindent
First the separation of the cuts is calculated, $\frac{n}{2c} = \frac{16}{6} = 2.\overline{6}$ and then the accumulated part is added. This part is the summation of all previously calculated values (Totals). With this information, the positions of the chord endpoints can be obtained: $p_1$ is equal to the decimal part from the `Total'. $p_2=p_1 + \frac{n}{2}$. Finally, the vertices of each chord are extracted from the hamiltonian path list using the previous positions: For instance, since the nodes starts with 0, at position 5 we have the vertex 4, and the position 13 the vertex 12, therefore we add edge $(4, 12)$.\\

\begin{figure}[htb]
	\begin{center}
		\begin{tabular}{ccc}
			\includegraphics[scale=0.65]{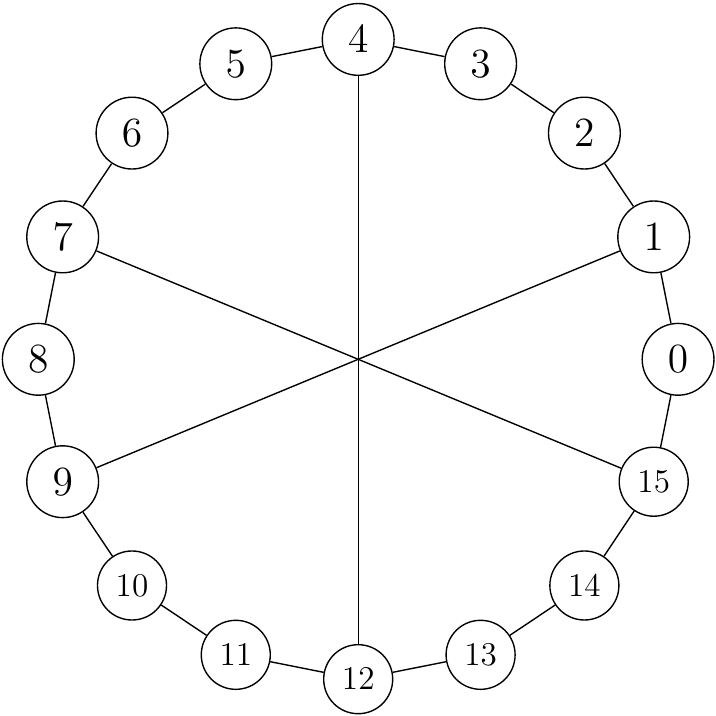} &  & \includegraphics[scale=0.65]{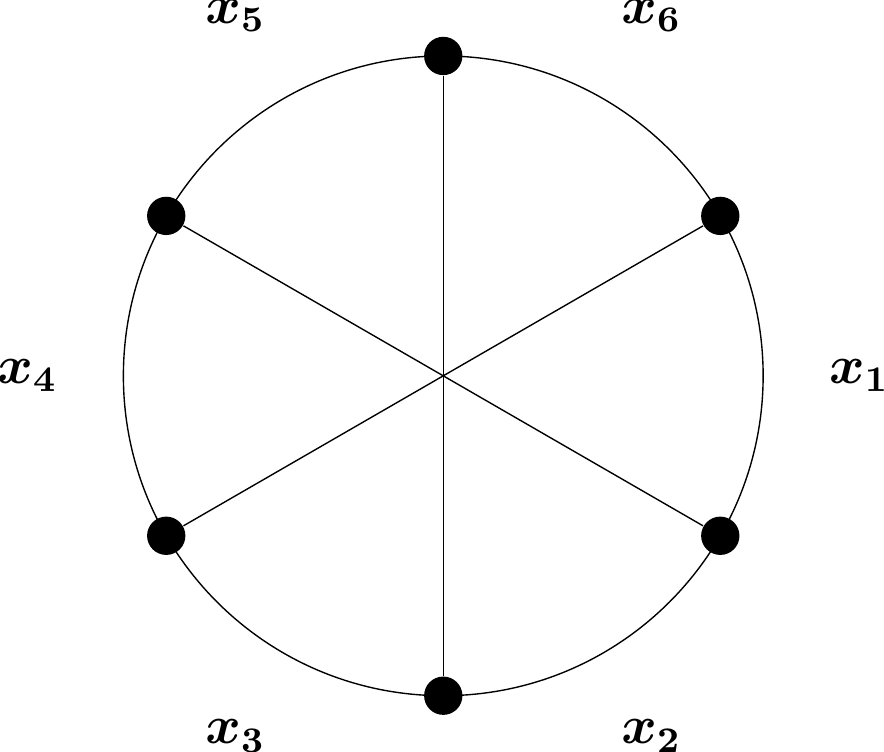} \\
			 (a) & \hspace{0.5cm} & (b) \\
		\end{tabular}
	\end{center}
	\caption{\newline
		(a) The fair cake-cutting graph $FCG_{16,3}$. It has vector of c-path lengths $(2,3,3,2,3,3)$.\newline 
		(b) A graph in ${\cal H}(n,n+3)$ of type A with diametrical chords.}\label{fig:fc16}
	\end{figure}
\end{example}

The fair cake-cutting graphs $FCG_{n,c}$ are similar to the family of graphs that are a solution of the following augmentation problem:  Starting from the cycle graph $C_n$, add a single edge at each step, in order to maximize the reliability of the resulting graph. Romero (see \cite{Rom17,Rom18}) finds the sequence of graphs $\{G^{(i)}\}_{i=0,\dots, \lfloor \frac{n}{2} \rfloor}$ with $G^{(0)}=C_n$ such that $G^{(i+1)}=G^{(i)}\cup \{e_{i+1}\}$ gives the best augmentation. This process (called also the fair cake-cutting process) ends with the circulant graph with steps $1$ and $\frac{n}{2}$, that is, a cubic hamiltonian graph where every vertex is joined to its opposite vertex in the hamiltonian cycle. For $n=8$, this is the Wagner graph depicted in Figure \ref{fig:mrg}, which is a uniformly most reliable graph for $(8,12)$. The main difference between both families is that in our case the number of cuts $c$ is previously known, meanwhile, in this other family, the cuts are performed as the guest arrives. In fact, both families differ when three or more cuts are performed. As a consequence of Theorem \ref{th:n+2} and Proposition \ref{prop:n+1} we have that $FCG_{n,c}$ produces uniformly most reliable hamiltonian graphs for $c=1$ and $c=2$.

\begin{corollary}
$FCG_{n,1}$ and $FCG_{n,2}$ are uniformly most reliable hamiltonian graphs for $m = n+1$ and $m=n+2$, respectively.
\end{corollary}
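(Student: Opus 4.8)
The strategy is to show that the graphs produced by Algorithm~\ref{algo_faircake} for $c=1$ and $c=2$ are, up to isomorphism, exactly the graphs already identified as uniformly most reliable in Proposition~\ref{prop:n+1} and Theorem~\ref{th:n+2}. Thus the whole proof amounts to unwinding the algorithm in these two cases and comparing the resulting structure against those earlier characterizations.

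For $c=1$ (hence $m=n+1$) the algorithm sets $\mathit{separation}=n/2$ and in its first pass adds the chord $(\lfloor n/2\rfloor-1,\,n-1)$; each subsequent pass of the while loop (which runs for $\mathit{placedch}=0,1,\dots,c$) merely re-inserts this same chord read modulo $n$, so $FCG_{n,1}$ is $C_n$ together with a single chord joining two vertices whose distance along the cycle equals $\lfloor n/2\rfloor$, i.e.\ two vertices of $C_n$ at maximum distance. By Proposition~\ref{prop:n+1}, $FCG_{n,1}$ is then uniformly most reliable in ${\cal H}(n,n+1)$.

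For $c=2$ (hence $m=n+2$) the algorithm sets $\mathit{separation}=n/4$ and places two chords. Writing $n=4k+\alpha$ with $\alpha\in\{0,1,2,3\}$ and tracing the successive values of $\mathit{position}$ together with $\mathrm{int}(\mathit{position})$ and $\mathrm{int}(\mathit{position}+n/2)$, one finds that the four degree-$3$ vertices appear, in cyclic order around $C_n$, with consecutive arc lengths $(k,k,k,k)$, $(k+1,k,k,k)$, $(k+1,k,k+1,k)$, $(k+1,k+1,k+1,k)$ for $\alpha=0,1,2,3$ respectively (up to cyclic rotation). These are precisely the c-path length vectors of Theorem~\ref{th:n+2}. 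Since the two chords are (near-)diametrical, their endpoints interleave around the cycle, so $FCG_{n,2}$ is of type~A; hence it coincides with the type-A graph having the optimal c-path vector. The proof of Theorem~\ref{th:n+2} shows that this vector simultaneously maximizes $f=\tau$ and $g=N_{m-2}$ over $X_n$, and Proposition~\ref{prop:propAB} shows type~A beats type~B for every fixed vector, so $FCG_{n,2}$ maximizes both graph-dependent coefficients $N_{m-2}$ and $\tau$ of \eqref{eq:eq2} over all of ${\cal H}(n,n+2)$ (the remaining coefficients being forced). By Observation~\ref{obs:rel}, $FCG_{n,2}$ is uniformly most reliable in ${\cal H}(n,n+2)$.

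The only delicate step is the arc-length computation for $c=2$: one must track how the fractional parts of the partial sums of $n/4$ (namely $\alpha/4,\alpha/2,3\alpha/4,\alpha$) are truncated by $\mathrm{int}(\cdot)$, and check case by case that the surplus $\alpha$ is distributed as $(0,0,0,0)$, $(1,0,0,0)$, $(1,0,1,0)$, $(1,1,1,0)$ and not in some other pattern, matching Theorem~\ref{th:n+2} exactly. A minor additional observation is that the terminal iteration of the while loop re-adds an already present chord, so $FCG_{n,c}$ has exactly $c$ chords as intended.
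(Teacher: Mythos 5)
Your proof is correct and follows the same route the paper intends: the corollary is stated there without a written proof, as an immediate consequence of Proposition~\ref{prop:n+1} and Theorem~\ref{th:n+2}, and your unwinding of Algorithm~\ref{algo_faircake} supplies exactly the missing verification that $FCG_{n,1}$ and $FCG_{n,2}$ realize the maximum-distance chord and the optimal type-A c-path-length vectors $(k,k,k,k)$, $(k+1,k,k,k)$, $(k+1,k,k+1,k)$, $(k+1,k+1,k+1,k)$, respectively. Your remark that Theorem~\ref{th:n+2} as literally stated is only a necessary condition, so that one must invoke the simultaneous maximization of $N_{m-2}$ and $\tau$ established in its proof together with Proposition~\ref{prop:propAB} and Observation~\ref{obs:rel} to conclude optimality, is a worthwhile detail the paper glosses over.
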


\section{Computational approach for $m=n+3$ and beyond}\label{sec:Comp_asp}

Uniformly most reliable hamiltonian graphs have been totally characterized when $m \leq n+2$ in section \ref{sec:most_reliable_ham}. We also give a construction of these optimal graphs in section \ref{sec:faircake}. Beyond this point, we have performed some computational tools in order to generate uniformly most reliable hamiltonian graphs for $m \geq n+3$. First, we discuss the computation of the reliability polynomial of a graph. 

\subsection*{The computation of the reliability polynomial}

\textit{The factoring theorem} is a recursive method for computing $\mbox{Rel}(G,p)$ based on the combination of two graph operations: edge deletion $G-e$, and edge contraction $G/e$ (for further details see, for instance, \cite{Perez2018}):

\begin{equation}
\mbox{Rel}(G,p) = 
\begin{cases}
\mbox{Rel}(G-e, p) & \mbox{ if } e \mbox{ is a loop, } \\
p \mbox{Rel}(G/e, p) & \mbox{ if } e \mbox{ is a cut-edge, } \\
(1-p)\mbox{Rel}(G-e,p) + p\mbox{Rel}(G/e,p) & \mbox{ otherwise. } 
\end{cases}
\label{eq:deletion-contraction}
\end{equation}

The algorithm \ref{alg:rel_poly_ft} shows the implementation of the theorem done in our code. \\
\vspace{0.2cm}
\IncMargin{1em}
\begin{algorithm}[htb]
	\SetKwData{Left}{left}\SetKwData{This}{this}\SetKwData{Up}{up}
	\SetKwFunction{Union}{Union}\SetKwFunction{FindCompress}{FindCompress}
	\SetKwInOut{Input}{input}\SetKwInOut{Output}{output}
	
	\Input{$g \leftarrow$ Graph}
	\Output{Reliability Polynomial}
	
	\BlankLine
	// If the graph is not connected, then it has a reliability polynomial of 0\\
	\If{ g is not connected}{
		\Return 0
	}
	\BlankLine
	// if the number of edges $>$ 0, then we perform the two sub-cases of the Factoring Theorem
	\If{ number of edges of g $>$ 0}{
		\textbf{\textit{edge}} e = g.random\_edge(e)\\
		\BlankLine
		\textbf{\textit{graph}} contracted = g.contract\_edge(e)\\
		\textbf{\textit{graph}} deleted = g.delete\_edge(e)\\
		\BlankLine
		\textbf{\textit{polynomial}} rec\_contracted = recursion wih the graph \textit{contracted}\\
		\textbf{\textit{polynomial}} rec\_deleted = recursion wih the graph \textit{deleted}\\
		\textbf{\textit{polynomial}} s = $p \cdot rec\_contracted + (1-p) \cdot rec\_deleted$\\
		\BlankLine
		\Return \textit{s} 
	}
	\BlankLine
	// Otherwise, we only have 0 edges and 1 vertex, which is connected, so we return 1.\\
	\Return 1
	\caption{Reliability Polynomial Factoring Theorem}\label{alg:rel_poly_ft}	
\end{algorithm}\DecMargin{1em}
\vspace{0.3cm}

We perform a modified version of this \textit{Factoring Theorem} which works slightly different: In each recursion, if there exist some method that can directly retrieve the reliability polynomial or with less cost than another recursion, then, the method will retrieve it and the recursion will stop in that generated subgraph. In other words the main idea to improve this algorithm is to prevent it to `dismantle' the graph to its very basic components (trivial graphs) by giving the reliability of the subgraphs before becoming basic components. We developed a series of fast formulas for specific families of graphs such as multi-tree, multi-cycle and glued cycles that when one of the subgraphs matches one of the families of our formulas then, the reliability is directly returned. With this modified method we have been able to compute the reliability polynomial of all graphs in ${\cal H}(n,n+3)$ for any $n \leq 11$. All (non-isomorphic) graphs have been generated first using {\it Nauty} and we get the hamiltonian ones using the library \textit{Graphx} from \textit{Python}. The drawback of this function is that, at the worst case, runs in linear time ($O(n)$). The coefficients list $N_i$ of the reliability polynomial of those uniformly most reliable hamiltonian graphs is presented in Table \ref{tab:results_table1}.\\

\begin{table}[htb]
	\begin{center}
		\begin{tabular}{|c|c|c|c|c|c|c|}
			\hline
			\ Study case & $Rel(H,p)$ coefficients vector $(N_i)$ \\ \hline \hline
			${\cal H}(6,9)$ & $[1, 9, 36, 78, 81]$ \\ \hline
			
			${\cal H}(7,10)$ & $[1, 10, 44, 104, 117]$ \\ \hline
			
			${\cal H}(8,11)$ & $[1, 11, 53, 137, 168]$ \\ \hline
			
			${\cal H}(9,12)$ & $[1, 12, 63, 178, 240]$ \\ \hline
			
			${\cal H}(10,13)$ & $[1, 13, 74, 226, 328]$ \\ \hline
			
			${\cal H}(11,14)$ & $[1, 14, 86, 284, 445]$ \\ \hline
			\hline
		\end{tabular}
	\end{center}
	\caption{Coefficients list of uniformly most reliable hamiltonian graphs for $6 \leq n \leq 11$ and $m=n+3$.}\label{tab:results_table1}
\end{table}

Every graph listed in Table \ref{tab:results_table1} is of type A (in the sense explained in section \ref{sec:n+2}) and it has some diametrical chords (chords joining two vertices at maximum distance in the hamiltonian cycle, see figure \ref{fig:fc16}). Although we do not have a proof that uniformly most reliable hamiltonian graphs must be of type A with diametrical chords for $m=n+3$, this experimental result encourage us to look optimal graphs in this subset of hamiltonian graphs that we denote as $H_D(n,m)$. So, we designed a direct way to construct all graphs in $H_D(n,n+3)$ that lead us to go beyond $n=11$ vertices: starting with a cycle, draws a diametrical chord dividing the cycle in two parts. Then, makes a set of 2-element combinations between each group of nodes from each part. Notice that all the edges in this set cross the first diametrical chord. With this set it makes another 2-element combinations, but this time with the elements inside the created edge set. Finally, with each combination of 2 edges creates a new graph by adding them into the graph with the diametrical chord. The algorithm \ref{alg:dmtrl_opt} shows in detail this process.\\

\IncMargin{1em}
\begin{algorithm}[htb]
	\SetKwData{Left}{left}\SetKwData{This}{this}\SetKwData{Up}{up}
	\SetKwFunction{Union}{Union}\SetKwFunction{FindCompress}{FindCompress}
	\SetKwInOut{Input}{input}\SetKwInOut{Output}{output}
	
	\Input{$n \leftarrow$ Number of vertices.}
	\Output{List of type \textit{A} hamiltonian graphs with three chords (at least one chord is diametrical).}
	\BlankLine
	\BlankLine
	\textbf{\textit{graph}} $cycle \leftarrow$ Cycle with edges $(0,1),(1,2),\dots,(n-1,0)$. \\
	\textbf{\textit{list}} $vertices \leftarrow cycle.nodes$ \\
	\BlankLine
	// Set the diametrical chord\\
	$cycle\leftarrow$ add edge (0, floor(n/2))
	\BlankLine
	// Remove the nodes of the added chord from \textit{vertices}\\
	$vertices \leftarrow$ remove (0, floor(n/2))\\
	\BlankLine
	// Get possible chords\\
	\textbf{\textit{list}} $hvertices1 \leftarrow$ 1st half of the list \textit{vertices}\\
	\textbf{\textit{list}} $hvertices2 \leftarrow$ 2nd half of the list \textit{vertices}\\
	\textbf{\textit{list}} $possibleVertices \leftarrow$ all combinations of elements between \textit{hvertices1} and \textit{hvertices2}\\
	\BlankLine
	// From all non-existent possible edges, get combinations of 2 chords\\
	// Notice that we already added 1 chord (the diametrical one) \\
	\textbf{\textit{list}} $edgeCombinations \leftarrow$ all combinations of 2 elements from the list \textit{possibleVertices}
	\BlankLine
	// For each combination create a graph and save it into a list of graphs\\
	\textbf{\textit{list}} \textit{hamiltonians}\\
	\For{ combination \textbf{in} combinations}{
		\textbf{\textit{graph}} $tmp \leftarrow$ copy(\textit{cycle})\\
		$tmp\leftarrow$ add edges in \textit{combination}\\
		$hamiltonians \leftarrow$ add \textit{tmp}\\	
	}
	
	\Return \textit{hamiltonians} 
	\caption{Generation of all graphs in ${\cal H_D}(n,n+3)$.}\label{alg:dmtrl_opt}	
\end{algorithm}\DecMargin{1em}
\vspace{0.3cm}
The generated list of graphs contains many isomorphic graphs. Then we use \textit{nauty} to remove isomorphic graphs from the list. We compute the reliability polynomial of each graph of this shorter list with the method explained at the first part of this section. Table \ref{tab:results_table2} shows the uniformly most reliable graphs of type A with at least one diametrical chord for $n \leq 34$ and $m=n+3$. \\

\begin{table}[htb]
	\begin{minipage}{.5\textwidth}
		\begin{tabular}{|c|c|c|c|c|c|c|}
			\hline
			\ Study case & $Rel(H, p)$ coefficients vector $(N_i )$ \\ \hline \hline
			${\cal H_D}(12,15)$ & $[1, 15, 99, 353, 600]$ \\ \hline
			
			${\cal H_D}(13,16)$ & $[1, 16, 112, 422, 755]$ \\ \hline
			
			${\cal H_D}(14,17)$ & $[1, 17, 126, 502, 948]$ \\ \hline
			
			${\cal H_D}(15,18)$ & $[1, 18, 141, 594, 1188]$ \\ \hline
			
			${\cal H_D}(16,19)$ & $[1, 19, 157, 697, 1464]$ \\ \hline
			
			${\cal H_D}(17,20)$ & $[1, 20, 174, 814, 1799]$ \\ \hline
			
			${\cal H_D}(18,21)$ & $[1, 21, 192, 946, 2205]$ \\ \hline
			
			${\cal H_D}(19,22)$ & $[1, 22, 210, 1078, 2611]$ \\ \hline
			
			${\cal H_D}(20,23)$ & $[1, 23, 229, 1225, 3088]$ \\ \hline
			
			${\cal H_D}(21,24)$ & $[1, 24, 249, 1388, 3648]$ \\ \hline
			
			${\cal H_D}(22,25)$ & $[1, 25, 270, 1566, 4272]$ \\ \hline
			
			${\cal H_D}(23,26)$ & $[1, 26, 292, 1762, 4995]$ \\ \hline
		\end{tabular}
	\end{minipage}%
	\begin{minipage}{.5\textwidth}
		\begin{tabular}{|c|c|c|c|c|c|c|}
			\hline
			\ Study case & $Rel(H, p)$ coefficients vector $(N_i )$ \\ \hline \hline
			${\cal H_D}(24,27)$ & $[1, 27, 315, 1977, 5832]$ \\ \hline
			
			${\cal H_D}(25,28)$ & $[1, 28, 338, 2192, 6669]$ \\ \hline
			
			${\cal H_D}(26,29)$ & $[1, 29, 362, 2426, 7620]$ \\ \hline
			
			${\cal H_D}(27,30)$ & $[1, 30, 387, 2680, 8700]$ \\ \hline
			
			${\cal H_D}(28,31)$ & $[1, 31, 413, 2953, 9880]$ \\ \hline
	
			${\cal H_D}(29,32)$ & $[1, 32, 440, 3248, 11209]$ \\ \hline
			
			${\cal H_D}(30,33)$ & $[1, 33, 468, 3566, 12705]$ \\ \hline
			
			${\cal H_D}(31,34)$ & $[1, 34, 496, 3884, 14201]$ \\ \hline
	
			${\cal H_D}(32,35)$ & $[1, 35, 525, 4225, 15864]$ \\ \hline
			
			${\cal H_D}(33,36)$ & $[1, 36, 555, 4590, 17712]$ \\ \hline
			
			${\cal H_D}(34,37)$ & $[1, 37, 586, 4978, 19704]$ \\ \hline
			
			\hline
		\end{tabular}
	\end{minipage}

	\caption{Coefficients list of uniformly most reliable hamiltonian graphs of type A with at least one diametrical chord.}\label{tab:results_table2}
\end{table}

Our conjecture about the uniformly most reliable hamiltonian graph in $H(n,m)$ must be in $H_D(n,m)$ is no longer true for $m \geq n+4$. In Figure \ref{fig:mplus4} we present some uniformly most reliable hamiltonian graphs for $m=n+4$ found by computer. Despite the case $n=8,m=12$ (see Figure \ref{fig:mrg} (a)), the remaining cases are not of type A. The situation for $m=n+5$ is similar (see Figure \ref{fig:mplus5}). 

We have been able to find all uniformly most reliable hamiltonian graphs up to $n=11$ vertices and $m=16$ edges using the general method described at the beginning of the section: First we generate a list containing all non-isomorphic graphs of a given order and size using {\em Nauty}. Afterwards, the reliability polynomial of every graph in the list is computed by using our improved version of the {\em factoring theorem}. 

\begin{figure}[htb]
\begin{center}
\begin{tabular}{ccc}
\includegraphics[scale=0.65]{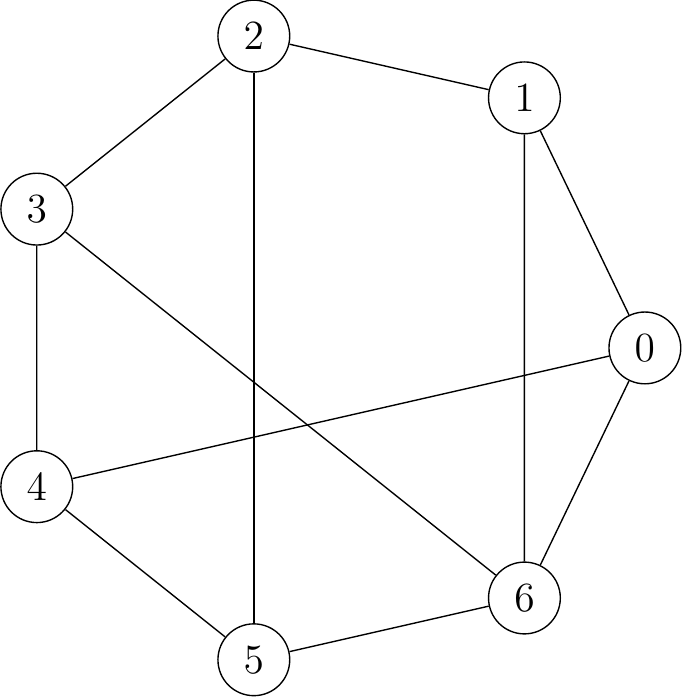} & \includegraphics[scale=0.65]{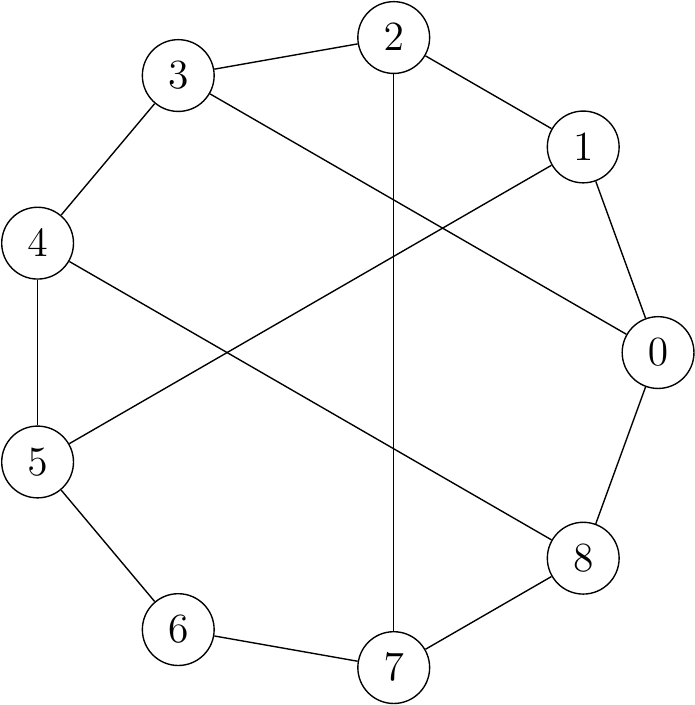} & \includegraphics[scale=0.65]{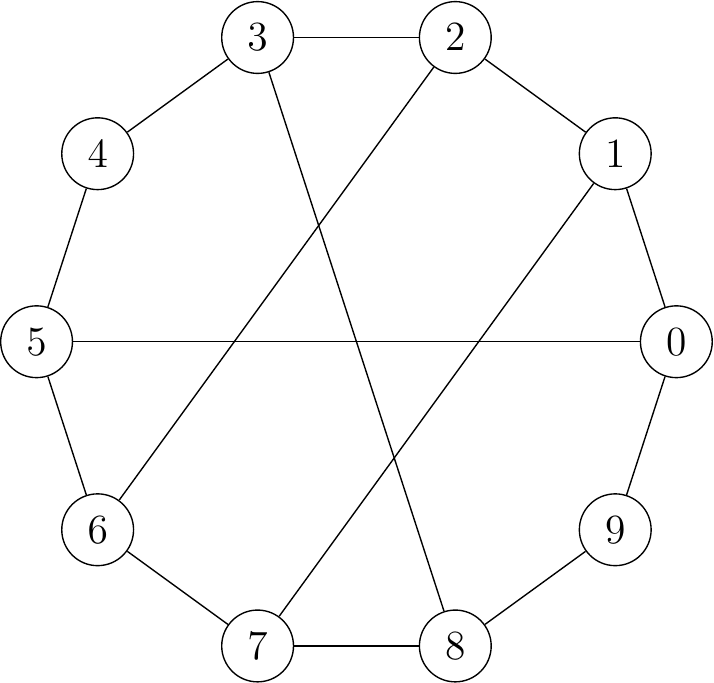} \\
 (a) $n=7,m=11$ & (b) $n=9,m=13$ & (c) $n=10,m=14$ \\
\end{tabular}
\end{center}
\caption{Some uniformly most reliable hamiltonian graphs for $m=n+4$.}\label{fig:mplus4}
\end{figure}

\begin{figure}[htb]
\begin{center}
\begin{tabular}{ccc}
\includegraphics[scale=0.65]{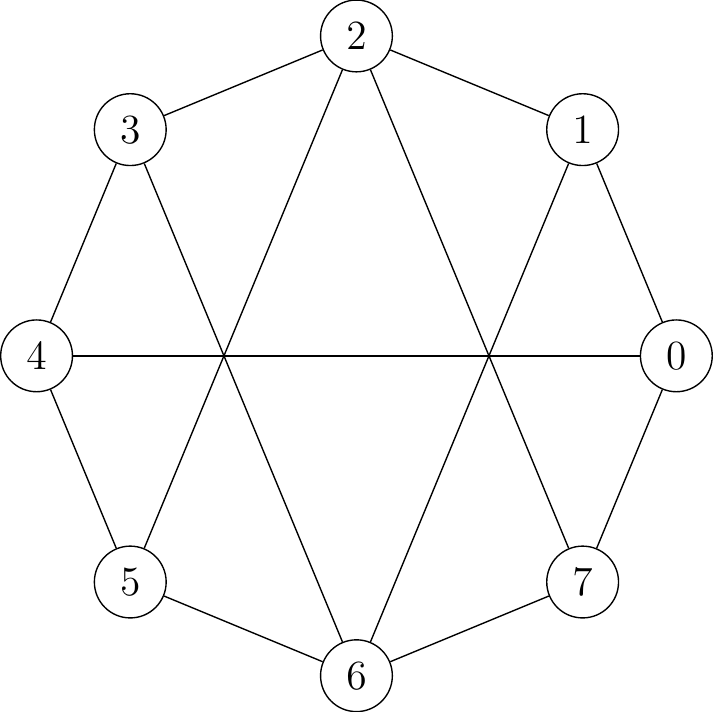} & \includegraphics[scale=0.65]{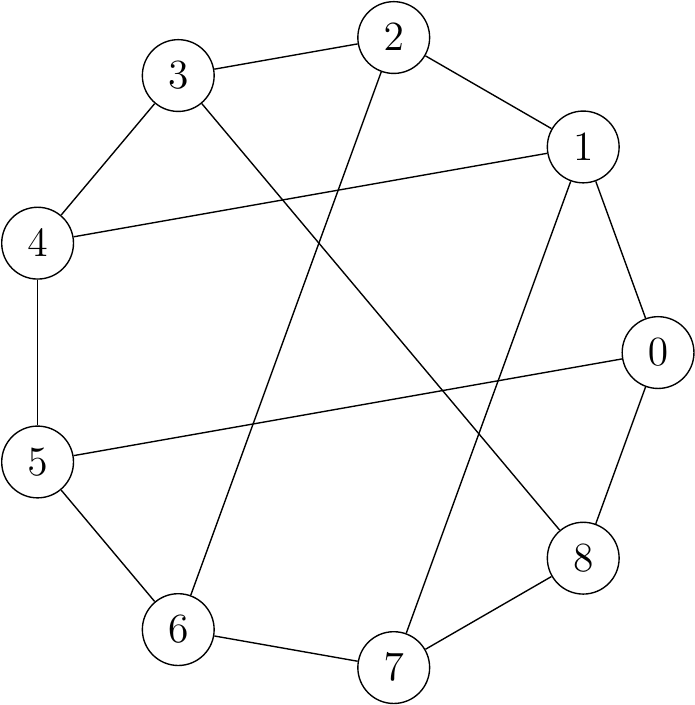} & \includegraphics[scale=0.65]{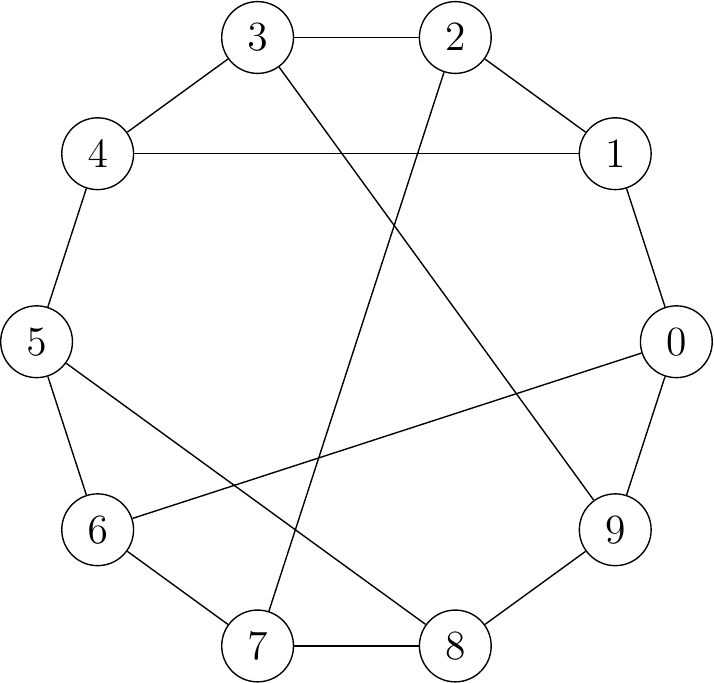} \\
 (a) $n=8,m=13$ & (b) $n=9,m=14$ & (c) $n=10,m=15$ \\
\end{tabular}
\end{center}
\caption{Some uniformly most reliable hamiltonian graphs for $m=n+5$.}\label{fig:mplus5}
\end{figure}

A (simple) graph of order $n$ has at most $m=\binom{n}{2}$ edges. There is only one of such graphs with maximum number of edges, which is the complete graph $K_n$ and hence it is uniformly most reliable. There is also only one graph (up to isomorphisms) with $m=\binom{n}{2}-1$ edges, but for $m=\binom{n}{2}-2$ we have two different graphs: we can eliminate from $K_n$ either a path of length two or two independent edges. This latter case gives the uniformly most reliable graph. More in general, it is already known that when $m \geq \binom{n}{2} - \frac{n}{2}$ then there exist a uniformly most reliable graph which is the graph whose complement graph have a set of independent edges (see \cite{Shier74max}). Any graph with a number of edges large enough is hamiltonian, and hence uniformly most reliable graphs are hamiltonian in this case. For instance, it is well known that every graph satisfying $m \geq 1/2(n^2-3n+6)$ is hamiltonian (see \cite{Chartrand2015}). Every graph with $m \geq \binom{n}{2} - \frac{n}{2}$ satisfies also $m \geq 1/2(n^2-3n+6)$ whenever $n \geq 6$ and hence uniformly most reliable graphs in this case (graphs whose complement has a set of independent edges) are 
also uniformly most reliable hamiltonian graphs.

\begin{proposition}\label{prop:mgran}
Given an integer $n\geq 6$, there is a uniformly most reliable graph in ${\cal H}(n,m)$ for any $m \geq \binom{n}{2} - \frac{n}{2}$.
\end{proposition}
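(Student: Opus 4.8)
The plan is to deduce the statement directly from two facts that are essentially already assembled in the discussion preceding it. The first is Shier's characterization \cite{Shier74max} of the optimal graph in the dense regime: whenever $m \geq \binom{n}{2} - \frac{n}{2}$, writing $k = \binom{n}{2} - m$ (so that $k \leq \lfloor n/2 \rfloor$), the graph $G_0 \in {\cal G}(n,m)$ whose complement in $K_n$ is a matching of size $k$ is uniformly most reliable in ${\cal G}(n,m)$. Such a $G_0$ exists and is unique up to isomorphism precisely because $k \leq \lfloor n/2 \rfloor$. The second fact is the sufficient condition for Hamiltonicity of \cite{Chartrand2015}: every graph on $n$ vertices with $m \geq \frac{1}{2}(n^2-3n+6)$ edges is hamiltonian.

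First I would check that $G_0$ is hamiltonian. A one-line computation gives $\binom{n}{2} - \frac{n}{2} = \frac{1}{2}(n^2-2n)$, and $\frac{1}{2}(n^2-2n) \geq \frac{1}{2}(n^2-3n+6)$ is equivalent to $n \geq 6$. Hence for $n \geq 6$ and $m \geq \binom{n}{2} - \frac{n}{2}$ the graph $G_0$ meets the hypothesis of the Hamiltonicity condition, so $G_0 \in {\cal H}(n,m)$. (An even more elementary route avoids \cite{Chartrand2015} altogether: each vertex of $G_0$ is incident to at most one edge of the deleted matching, so $\delta(G_0) \geq n-2 \geq n/2$ for $n \geq 4$, and Dirac's theorem applies.)

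It then remains to lift optimality from ${\cal G}(n,m)$ to ${\cal H}(n,m)$. Since ${\cal H}(n,m) \subseteq {\cal G}(n,m)$ and $\mbox{Rel}(G_0,p) \geq \mbox{Rel}(H,p)$ for every $H \in {\cal G}(n,m)$ and every $p \in [0,1]$, the same inequality holds a fortiori for every $H \in {\cal H}(n,m)$; as $G_0$ is itself hamiltonian, it is a uniformly most reliable graph in ${\cal H}(n,m)$, as claimed. The only points that require care are the correct reading of $\frac{n}{2}$ as $\lfloor n/2 \rfloor$ when $n$ is odd (harmless, since $m$ is an integer) and the verification of the threshold $n \geq 6$; there is no genuine combinatorial obstacle here, the result being a packaging of two established theorems together with the trivial observation that optimality on a superset implies optimality on a subset containing the optimizer.
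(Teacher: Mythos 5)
Your proof is correct and follows essentially the same route as the paper: Shier's result that the complement of a matching is uniformly most reliable for $m \geq \binom{n}{2}-\frac{n}{2}$, combined with the edge-count Hamiltonicity threshold $m \geq \frac{1}{2}(n^2-3n+6)$, which the bound $\binom{n}{2}-\frac{n}{2}=\frac{1}{2}(n^2-2n)$ meets exactly when $n\geq 6$. Your aside via Dirac's theorem is a pleasant simplification but does not change the argument.
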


\section{Non existence of uniformly most reliable hamiltonian graphs for some cases}

There are some cases where uniformly most reliable graphs do not exist: it is shown in \cite{Myr91} that the graph $G_2$ of order $n\geq 6$ even, which is defined as the complement of the graph $P_4 \cup K_2 \cup \frac{n-6}{2}K_2$  satisfies $\mbox{Rel}(G_2,p)>\mbox{Rel}(G',p)$ for all $G' \in {\cal G}(n,m)$, $m=\binom{n}{2}-\frac{n+2}{2}$ and $p$ sufficiently close to one. Besides, the complement of $2P_3 \cup \frac{n-6}{2}K_2$, defined as $G_1$, satisfies $\mbox{Rel}(G_1,p)>\mbox{Rel}(G_2,p)$ for $p$ sufficiently close to zero. As a consequence, there is no uniformly most reliable graphs in ${\cal G}(n,m)$ for $m=\binom{n}{2}-\frac{n+2}{2}$. We use this result to prove that there are infinitely many pairs $(n,m)$ where uniformly most reliable hamiltonian graphs do not exist.

\begin{proposition}\label{prop:nonexist}
There are no uniformly most reliable graphs in ${\cal H}(n,m)$ for
\begin{itemize}
 \item $m=\binom{n}{2}-\frac{n+2}{2}$ for all $n \geq 6$ even;
 \item $m=\binom{n}{2}-\frac{n+5}{2}$ for all $n \geq 7$ odd.
\end{itemize}
\end{proposition}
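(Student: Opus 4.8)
The plan is to bootstrap from the non-existence result for ${\cal G}(n,m)$ quoted above from \cite{Myr91}. The only new ingredient needed is the observation that the competing graphs $G_1,G_2$ appearing there (and their odd-order analogues) already lie in ${\cal H}(n,m)$. Granting that, non-existence in the smaller class ${\cal H}(n,m)$ follows by a short limiting argument: any uniformly most reliable graph in ${\cal H}(n,m)$ would be forced to equal the near-$p=1$ optimum $G_2$, yet $G_1\in{\cal H}(n,m)$ overtakes $G_2$ near $p=0$.

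For the even case, $n\geq 6$, the complements $\overline{G_1}=2P_3\cup\frac{n-6}{2}K_2$ and $\overline{G_2}=P_4\cup K_2\cup\frac{n-6}{2}K_2$ are disjoint unions of paths, so $\Delta(\overline{G_i})\leq 2$ and $\delta(G_i)\geq n-3$. For non-adjacent $u,v$ in $G_i$, i.e.\ $uv\in E(\overline{G_i})$, one gets $d_{G_i}(u)+d_{G_i}(v)=2(n-1)-\bigl(d_{\overline{G_i}}(u)+d_{\overline{G_i}}(v)\bigr)\geq 2n-6\geq n$; moreover $\delta(G_i)\geq n-3\geq n/2$ makes $G_i$ connected, so Ore's Theorem (Theorem~\ref{the:Ore}) gives $G_1,G_2\in{\cal H}(n,m)$. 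Now suppose $H\in{\cal H}(n,m)$ were uniformly most reliable in ${\cal H}(n,m)$. Since $G_2\in{\cal H}(n,m)\subseteq{\cal G}(n,m)$ we would have $\mbox{Rel}(H,p)\geq\mbox{Rel}(G_2,p)$ on $[0,1]$; but \cite{Myr91} gives $\mbox{Rel}(G_2,p)>\mbox{Rel}(G',p)$ for $p$ near $1$ and every $G'\in{\cal G}(n,m)$ with $G'\neq G_2$, so taking $G'=H$ forces $H=G_2$. Then $\mbox{Rel}(G_1,p)>\mbox{Rel}(G_2,p)=\mbox{Rel}(H,p)$ for $p$ near $0$ with $G_1\in{\cal H}(n,m)$, contradicting the optimality of $H$. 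Hence no uniformly most reliable graph exists in ${\cal H}(n,m)$ for even $n\geq 6$.

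The odd case runs along the same lines with the order-$n$ analogue of Myrvold's pair: for $n\geq 7$ odd and $m=\binom{n}{2}-\frac{n+5}{2}$, let $G_2$ be the complement of $P_4\cup K_3\cup\frac{n-7}{2}K_2$ and $G_1$ the complement of $2P_3\cup K_3\cup\frac{n-9}{2}K_2$ (for $n=7$, replace $G_1$ by a suitable graph whose complement is a maximum-degree-$2$ graph on $7$ vertices with $6$ edges); equivalently, adjoin to Myrvold's even witnesses on $n-1$ vertices one extra vertex of degree $n-3$, joined to everything except the two endpoints of one matching edge. These complements again have maximum degree $\leq 2$, so the Ore computation above places $G_1,G_2$ in ${\cal H}(n,m)$, and the limiting argument of the even case closes the proof \emph{provided} the two defining inequalities are re-established, namely $\mbox{Rel}(G_2,p)>\mbox{Rel}(G',p)$ near $p=1$ for all other $G'\in{\cal G}(n,m)$, and $\mbox{Rel}(G_1,p)>\mbox{Rel}(G_2,p)$ near $p=0$. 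I expect this verification to be the main obstacle, since it is precisely where Myrvold's counting has to be redone for the odd pair: one must check that $G_2$ lexicographically minimizes, over all of ${\cal G}(n,m)$, the number of minimum edge cuts and then of $(n-2)$-edge cuts --- a count which for $n\geq 7$ reduces to counting degree-$(n-3)$ vertices, because every minimum edge cut isolates a single vertex --- and that $G_1$ has strictly more spanning trees than $G_2$, say via the matrix-tree theorem or deletion-contraction on the sparse complements. The remaining steps (hamiltonicity of the witnesses, and the passage from ${\cal G}(n,m)$ to ${\cal H}(n,m)$) are routine.
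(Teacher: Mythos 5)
Your even-$n$ argument is correct and is essentially the paper's proof: show via Ore's Theorem that Myrvold's two witnesses $G_1=\overline{2P_3\cup\frac{n-6}{2}K_2}$ and $G_2=\overline{P_4\cup K_2\cup\frac{n-6}{2}K_2}$ are hamiltonian, then use the two inequalities from \cite{Myr91} (near $p=1$ and near $p=0$) to rule out a common optimum inside ${\cal H}(n,m)$. Your "forcing $H=G_2$" phrasing is a slightly more explicit version of the same limiting argument.

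The odd case, however, has a genuine gap, and it is exactly the one you flag yourself. The paper does not need to redo any of Myrvold's counting: \cite{Myr91} already contains an odd-order pair for $m=\binom{n}{2}-\frac{n+5}{2}$, namely $G_3=\overline{C_3\cup P_4\cup\frac{n-7}{2}K_2}$ and $G_4=\overline{C_5\cup K_2\cup\frac{n-7}{2}K_2}$, together with the two inequalities ($G_4$ dominates all of ${\cal G}(n,m)$ near $p=1$, and $G_3$ beats $G_4$ near $p=0$); the only new content in the proposition is the Ore check that these two graphs are hamiltonian. Your proposal instead guesses the witnesses, and the guess conflicts with the cited result: the graph you nominate as the near-$p=1$ champion, $\overline{P_4\cup K_3\cup\frac{n-7}{2}K_2}$, is (since $K_3=C_3$) precisely the paper's $G_3$, which is the near-$p=0$ winner, not the near-$p=1$ winner --- that role belongs to $\overline{C_5\cup K_2\cup\frac{n-7}{2}K_2}$. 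All of these candidates tie on the count of degree-$(n-3)$ vertices (each complement has exactly five degree-$2$ vertices), so the first-order cut-counting you sketch cannot separate them, and the inequality $\mbox{Rel}(G_2,p)>\mbox{Rel}(G',p)$ near $p=1$ that your forcing argument rests on is, for your choice of $G_2$, false. Your second witness $\overline{2P_3\cup K_3\cup\frac{n-9}{2}K_2}$ also only makes sense for $n\geq 9$, leaving $n=7$ unhandled except by an unspecified placeholder. The fix is simply to use the pair $G_3,G_4$ as given in \cite{Myr91} and verify their hamiltonicity with the same Ore computation (their complements also have maximum degree $2$, so $\delta\geq n-3$ and the same degree-sum bound applies); no new reliability estimates are then required.
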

\begin{proof}
For the case $m=\binom{n}{2}-\frac{n+2}{2}$ it is suffice to show that the graphs $G_1$ and $G_2$ defined above as the complements of the graphs $2P_3 \cup \frac{n-6}{2}K_2$ and $P_4 \cup K_2 \cup \frac{n-6}{2}K_2$ for $n \geq 6$ even, respectively, are hamiltonian graphs. To this end notice that the minimum degree for a vertex in $G_1$ is $n-3$. Hence for any two given vertices $u$ and $v$, $d(u)+d(v) \geq 2n-6 \geq n$ for all $n \geq 6$. Applying Theorem \ref{the:Ore} we deduce that $G_1$ is hamiltonian. The same argument applies for $G_2$. \\

For $n \geq 7$ odd, define $G_3$ as the complement of $C_3 \cup P_4 \cup \frac{n-7}{2}K_2$ and $G_4$ as the complement of $C_5 \cup K_2 \cup \frac{n-7}{2}K_2$ as in \cite{Myr91} and apply again Theorem \ref{the:Ore} to proof that they belong to ${\cal H}(n,m)$, where $m=\binom{n}{2}-\frac{n+5}{2}$. In \cite{Myr91} it is shown that $\mbox{Rel}(G_4,p)>\mbox{Rel}(G',p)$ for all $G' \in {\cal G}(n,m)$ for $p$ sufficiently close to one. Besides $\mbox{Rel}(G_3,p)>\mbox{Rel}(G_4,p)$ for $p$ sufficiently close to zero, and the proof is completed.
\end{proof}


\section{Conclusions and open problems}

In this paper uniformly most reliable hamiltonian graphs have been characterized for $m \leq n+2$ and we give some light for the case $m=n+3$ which somehow follow the previous cases. The situation is different for $m \geq n+4$, where the problem is totally open except for some small cases found by computer. Nevertheless, one can use these results to obtain uniformly most reliable hamiltonian graphs for a fixed value of $n$. For instance, when $n=6$ we have nine cases to analyze ($6 \leq m \leq 15$): For $m=6$, $C_6$ is the uniformly most reliable hamiltonian graph. For $7 \leq m \leq 9$ we have that $FCG_{6,c}$ are uniformly most reliable hamiltonian graphs for $c=1,2,3$, respectively (also $K_{3,3}$ for $m=9$, which is isomorphic to $FCG_{6,3}$). For $m=11$ there is no uniformly most reliable hamiltonian graph (Proposition \ref{prop:nonexist}) and for $m \geq 12$ subgraphs of $K_6$ whose complement graph has a set of independent edges are uniformly most reliable (Proposition \ref{prop:mgran}). It remains the case $m=10$. We have found by computer that adding any edge to the graph $K_{3,3}$ produces the uniformly most reliable graph in this case.

\begin{problem}
Characterize uniformly most reliable hamiltonian graphs for other values of $n$ and $m$.
\end{problem}
The fair cake-cutting graph $FCG_{n,c}$ presented in section \ref{sec:faircake} is a uniformly most reliable graph for many values, but we believe that is also optimal for other values.

\begin{conjecture}
Let  $n \equiv 0 \pmod{2c}$. Then $FCG_{n,c}$ is a uniformly most reliable hamiltonian graph for $m=n+c$.
\end{conjecture}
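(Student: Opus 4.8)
\medskip
\noindent\textbf{Proof proposal.}
The plan is to prove the stronger, coefficient-wise statement $N_i(FCG_{n,c})\ge N_i(H)$ for every $H\in{\cal H}(n,n+c)$ and every $i$, which suffices by Observation~\ref{obs:rel}. When $m=n+c$ the reliability polynomial has only $c+2$ nonzero coefficients: $N_{n+c}=1$ always and $N_{n+c-1}=n+c$ since every such graph is $2$-edge-connected, so the only structure-dependent ones are (at most) $\tau=N_{n-1},N_n,\ldots,N_{n+c-2}$ --- a number depending on $c$ but not on $n$. The problem is therefore finite in a strong sense once a good parametrization is fixed. I would set up the natural generalization of Section~\ref{sec:n+2}: write $H$ as a hamiltonian cycle plus $c$ chords and encode it by (i) its \emph{chord diagram}, the cyclic matching of the (at most) $2c$ chord-endpoints on the cycle, and (ii) the vector of arc-lengths $(x_1,\ldots,x_{2c})$ with $\sum x_i=n$, where coincident endpoints or short chords correspond to some $x_i=0$. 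For a fixed chord diagram, the deletion-counting that proves Proposition~\ref{prop:relm-2} expresses each relevant coefficient as an explicit polynomial in the $x_i$: $N_{m-j}=\binom{m}{j}-(\#\text{ of }j\text{-edge-sets whose removal disconnects }H)$, and for $j\le c+1$ those disconnecting sets are controlled by which arcs are ``parallel'' in the chord diagram, so each count is a sum of products of the $x_i$.

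Within a fixed chord diagram I would maximize each coefficient polynomial over arc-length vectors, mimicking the two-step argument in the proof of Theorem~\ref{th:n+2}. On the continuous relaxation $\{\mathbf{x}\in\mathbb{R}^{2c}_{\ge 0}:\sum x_i=n\}$ I expect each coefficient to be smooth and strictly concave with a unique interior critical point at the balanced vector $(\tfrac{n}{2c},\ldots,\tfrac{n}{2c})$; concretely this is a Lagrange-multiplier computation plus a negative-definiteness check of the Hessian, exactly as for $\tilde f,\tilde g$ there, and the dihedral symmetry of the configuration in the arc-coordinates should pin the critical point to the center and keep the Hessian check uniform in $c$. To return to the integers I would prove the analogue of Lemma~\ref{lem:lemma}: if $\mathbf{y}$ is obtained from $\mathbf{x}$ by a one-unit ``balancing move'' between two arcs (the $2c$-coordinate versions of $\sigma$ and $\omega$) with $D(\mathbf{y})=D(\mathbf{x})+2$, then $N_i(\mathbf{x})\ge N_i(\mathbf{y})$ for every $i$, so a maximizer must have minimum $D$. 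The hypothesis $n\equiv 0\pmod{2c}$ is exactly what makes the unique minimum-$D$ vector the integral balanced vector $(k,\ldots,k)$ with $k=n/(2c)$; in the \emph{fully crossing} chord diagram (chord $i$ joining endpoint $p_i$ to endpoint $p_{i+c}$) this vector yields chords of length $c\cdot\tfrac{n}{2c}=\tfrac n2$, i.e.\ evenly spaced diametrical chords, which is precisely $FCG_{n,c}$.

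What remains --- and where I expect the real difficulty --- is the comparison between chord diagrams. For $c=2$ this is the single inequality in Proposition~\ref{prop:propAB} (a crossing pair of chords dominates a nested pair in both $\tau$ and $N_{m-2}$), but for larger $c$ there are combinatorially many diagrams and no single closed formula. The plan is to prove a local \emph{uncrossing} lemma --- replacing a nested or disjoint pair of chords on four fixed endpoints by the crossing pair never decreases any $N_i$ --- and then iterate it, using the number of crossing pairs as a potential uniquely maximized by the fully crossing diagram, to reduce every diagram to that one. Combined with the previous paragraph this gives $N_i(FCG_{n,c})\ge N_i(H)$ for all $i$ and all $H$, and Observation~\ref{obs:rel} concludes. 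Two cautions: since the relevant $N_i$ are genuinely different polynomials for different diagrams, the uncrossing inequality must hold in the same direction for all of them simultaneously (the data in Tables~\ref{tab:results_table1} and~\ref{tab:results_table2}, where every optimizer is of type A with diametrical chords, is encouraging but this step is not routine); and since the paper observes that for $c\ge 4$ the optimal hamiltonian graph need not be of type A in general, the congruence $n\equiv 0\pmod{2c}$ must be used essentially --- it is precisely the regime where the balanced diametrical configuration is feasible --- so if coefficient-wise domination should fail even there, one would have to replace Observation~\ref{obs:rel} by a direct sign analysis of $\mbox{Rel}(FCG_{n,c},p)-\mbox{Rel}(H,p)$ on $[0,1]$, which would be considerably harder.
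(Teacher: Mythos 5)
This statement is posed in the paper as an open conjecture; there is no proof of it in the paper to compare yours against, and what you have written is a programme rather than a proof --- you are candid about that. The programme is the natural one: it is exactly the $c$-chord generalization of Section \ref{sec:n+2}, encoding a graph of ${\cal H}(n,n+c)$ by a chord diagram plus an arc-length vector, expressing the few nontrivial coefficients $N_{n-1},\dots,N_{n+c-2}$ as polynomials in the arc lengths, maximizing over arc lengths for a fixed diagram via a continuous relaxation plus a discrete balancing lemma in the style of Lemma \ref{lem:lemma}, and then comparing diagrams. Even the first half is not free: the coefficient polynomials have degree up to $c+1$ and carry only a dihedral symmetry, so neither concavity of the relaxation nor negative definiteness of the Hessian at the centre is automatic for general $c$; the paper verifies this by an explicit computation only for $c=2$, and ``I expect each coefficient to be strictly concave'' is an assertion, not an argument.

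The genuine gap is the step you yourself flag: the comparison between chord diagrams. Your proposed unconditional uncrossing lemma --- replacing any non-crossing pair of chords by the crossing pair on the same four endpoints never decreases any $N_i$ --- would, once iterated, force every uniformly most reliable graph in ${\cal H}(n,n+c)$ to be of fully crossing type for \emph{every} $n$. But the paper's own computations (Figure \ref{fig:mplus4}) show that for $m=n+4$ and $n=9,10$ the optimizers are not of type A. Hence the uncrossing lemma is false as an unconditional statement; any correct version must depend on the arc lengths and ultimately on the hypothesis $n\equiv 0\pmod{2c}$, yet your iteration applies uncrossing to arbitrary intermediate diagrams where that hypothesis gives no control. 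There is also the residual possibility, which you note, that no single graph maximizes all coefficients simultaneously, in which case Observation \ref{obs:rel} cannot close the argument at all. Until the diagram-comparison step is supplied --- or replaced by a direct domination argument against each diagram class separately, or by a sign analysis of $\mbox{Rel}(FCG_{n,c},p)-\mbox{Rel}(H,p)$ on $[0,1]$ --- the argument does not close, and the statement remains exactly what the paper says it is: a conjecture.
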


The problem of finding uniformly most reliable graphs seems difficult, even for restricted versions of the problem, like the one we present here for hamiltonian graphs. It would be worth to study this problem for other classes of graphs.

\begin{problem}
Study the problem of finding uniformly most reliable graphs for a named class of graphs, such as bipartite graphs, Cayley graphs, etc.
\end{problem}

\subsection*{Acknowledgments}
Research of the authors was supported in part by grant MTM2017-86767-R (Spanish Ministerio de Ciencia e Innovacion).

\bibliographystyle{unsrt}
\bibliography{ReliabilityBiblio}

\end{document}